\newcommand{\1}{\mathbf{1}}
\newcommand{\pp}{ {\partial} }
\newcommand{\RR}{{{\mathbb R}}}
\newcommand{\R} {\mathbb R}
\newcommand{\cuad}{{\sqcap\kern-.68em\sqcup}}
\newcommand{\be}{\begin{equation}}
\newcommand{\ee}{\end{equation}}
\newtheorem{lemma}{Lemma}[section]
\newtheorem{proposition}{Proposition}[section]
\newtheorem{theorem}{Theorem}[section]
\newtheorem{corollary}{Corollary}[section]
\newtheorem{remark}{Remark}[section]
\newcommand{\bremark}{\begin{remark} \em}
	\newcommand{\eremark}{\end{remark} }
\numberwithin{equation}{section}
\begin{document}
\title[Global solutions for the Fujita equation]{Long-time dynamics for the energy critical heat equation in $\R^5$}

\author[Z. Li]{Zaizheng Li}
\address{\noindent
School of Mathematical Sciences, Hebei Normal University, Shijiazhuang 050024, Hebei, P.R. China}
\email{zaizhengli@hebtu.edu.cn}

\author[J. Wei]{Juncheng Wei}
\address{\noindent
Department of Mathematics,
University of British Columbia, Vancouver, B.C., V6T 1Z2, Canada}
\email{jcwei@math.ubc.ca}

\author[Q. Zhang]{Qidi Zhang}
\address{\noindent
Department of Mathematics,
University of British Columbia, Vancouver, B.C., V6T 1Z2, Canada}
\email{qdz@amss.ac.cn}

\author[Y. Zhou]{Yifu Zhou}
\address{\noindent
School of Mathematics and Statistics, Wuhan University, Wuhan 430072, China}
\email{yifuzhou@whu.edu.cn}

	\begin{abstract}
We investigate the long-time behavior of global solutions to the energy critical heat equation in $\R^5$
\begin{equation*}
	\begin{cases}
		\pp_t u=\Delta u+|u|^{\frac{4}{3}} u  ~&\mbox{ in }~ \RR^5 \times (t_0,\infty),\\
		u(\cdot,t_0)=u_0~&\mbox{ in }~ \RR^5.
	\end{cases}
\end{equation*}
For $t_0$ sufficiently large, we show the existence of positive solutions for a class of initial value $u_0(x)\sim  |x|^{-\gamma}$ as $|x|\rightarrow \infty$ with $\gamma>\frac32$ such that  the global solutions behave asymptotically
\begin{equation*}
	\| u(\cdot,t) \|_{L^\infty (\R^5)} \sim
	\begin{cases}
		t^{-\frac{3(2-\gamma)}{2}}  ~&\mbox{ if }~ \frac32<\gamma<2
		\\
		(\ln t)^{-3} ~&\mbox{ if }~ \gamma=2
		\\
		1 ~&\mbox{ if }~ \gamma>2
	\end{cases}
	\mbox{ \ for \ } t >t_0,
\end{equation*}
which is slower than the self-similar time decay $t^{-\frac{3}{4}}$. These rates are inspired by Fila-King \cite[Conjecture 1.1]{FilaKing12}.


\end{abstract}

\maketitle




\bigskip

\section{Introduction and main results}

\medskip

Consider the semilinear heat equation
\begin{equation}\label{heat-eq}
	\begin{cases}
		\pp_t u =\Delta u + |u|^{p-1}u, &
		\mbox{ \ in \ } \RR^n \times (0,\infty) ,
		\\
		u(\cdot,0)=u_0, &
		\mbox{ \ in \ } \RR^n,
	\end{cases}
\end{equation}
with $p>1$. It corresponds to the negative $L^2$-gradient flow of the associated energy functional
$$
\frac12\int_{\R^n} |\nabla u|^2-\frac1{p+1}\int_{\R^n}|u|^{p+1},
$$
which is decreasing along classical solutions.

\medskip

Equation \eqref{heat-eq} has been widely studied since Fujita's celebrated work \cite{Fujita66}. The Fujita equation looks rather simple, but extensively rich and sophisticated phenomena arise, and these are intimately related to the power nonlinearity in a rather precise manner. For instance, the Fujita exponent $p_F$, the Sobolev exponent $p_S$ defined respectively as
\begin{equation*}
	p_F:=1+\frac2n,\qquad
	p_S=\begin{cases}
		\frac{n+2}{n-2}~&\mbox{ for }~n\geq 3\\
		\infty ~&\mbox{ for }~n=1,2
	\end{cases}
\end{equation*}
play an important role in \eqref{heat-eq} concerning singularity formation, long-time dynamics, and many others, and they have been studied intensively in innumerable literature. It is well known that \eqref{heat-eq} possesses a global nontrivial solution $u\ge 0$ if and only if $p>p_F$. Whether or not the steady states exist greatly affects the dynamical behavior of \eqref{heat-eq}. The stationary equation of \eqref{heat-eq} does not have positive classical solutions if and only if $p<p_S$ (see \cite{Gidas-Spruck81} and \cite{Chen-Li91} for instance).  For $p=p_S$, up to translations and dilations, the positive steady state to the Yamabe problem is the well known Aubin-Talenti bubble
\begin{equation*}
	U(x)=\alpha_n (1+|x|^2)^{-\frac{n-2}{2}},
	\quad \alpha_n=\left[n(n-2)\right]^{\frac{n-2}{4}} .
\end{equation*}
Such a profile is commonly used when investigating the mechanism of singularity formation for \eqref{heat-eq} with critical exponent $p=p_S$. On the other hand, Liouville type theorems for the heat flow \eqref{heat-eq} and their applications have also been thoroughly investigated. In the subcritical case $p<p_S$, Pol\'{a}\v{c}ik and Quittner \cite{Polavcik-Quittner06} proved the nonexistence of positive radially symmetric bounded entire solution, and they showed that the global nonnegative radial solution of \eqref{heat-eq} decays to $0$ uniformly as $t\rightarrow \infty$. Pol\'{a}\v{c}ik, Quittner and Souplet \cite{Polacik07Indiana} developed a general scheme connecting parabolic Liouville type theorems and universal estimates of solutions. Recently in \cite{Quittner21}, Quittner proved the optimal Liouville theorems without extra symmetry nor decay assumptions on the solutions for $1<p<p_S$ and showed that the nonnegative global solution of \eqref{heat-eq} must decay to $0$ as $t\rightarrow \infty$.

\medskip

This paper aims to understand possible long-time dynamics for global solutions of \eqref{heat-eq} with $p=p_S$ in $\R^5$. Here we call a solution global if its maximal existence time is infinity. The long-time behavior for the solution of \eqref{heat-eq} is partially motivated by the study of threshold solutions. For any nonnegative, smooth function $\phi(x)$ with $\phi \not\equiv 0$, let us define
\begin{equation*}
	\alpha^*=\alpha^*(\phi) :=\sup\{ \alpha>0: \ T_{\max} (\alpha\phi) =\infty \},
\end{equation*}
and $u^*:=u(x,t;\alpha^* \phi)$ is called the  threshold solution associated with $\phi$.  Roughly speaking, the threshold solution lies on the borderline between global solutions and those that blow up in finite time since for $\alpha\gg \alpha^*$, the nonlinearity dominates the Laplacian and vice versa. At the threshold level, the dynamics for $u^*$ in the pointwise sense might be global and bounded, global and unbounded, or blow up in finite time. Any of these might happen depending on the power nonlinearity and the domain. We refer the readers to
Ni-Sacks-Tavantzis \cite{Ni84JDE}, Lee-Ni \cite{LeeNi}, Galaktionov-V\'azquez \cite{Vazquez97CPAM}, Pol\'{a}\v{c}ik \cite{Polacik2011ARMA}, Quittner \cite{Quittner2017}, and the monograph by Quittner and Souplet \cite{Souplet07book} and their references for comprehensive studies and descriptions of threshold solutions.
On the other hand, the global decaying threshold and non-threshold solutions of Fujita equation have been studied extensively, see \cite{Kavian87AIHP,LeeNi,Kawanago96,Suzuki1999,Gui01JDE,Polacik03MA,Polacik07Indiana,Quittner08,Fila08JMAA,Fila08MA,Suzuki08Indiana} and the references therein.

\medskip

In \cite{Kawanago96}, Kawanago gave a complete description of the asymptotic behavior of the positive solution in the case  $p_F <p <p_S$. Specially, $\| u(\cdot,t;\alpha^* \phi)\|_{L^\infty} \sim t^{-\frac{1}{p-1}}$ for $t>1$. The spatial decay of initial value plays an important role in the long-time behavior of solutions and threshold solutions of \eqref{heat-eq}. For $p\ge p_S$, under the assumption that the initial value $u_0$ is radial, positive, continuous, and $$\lim\limits_{|x|\rightarrow \infty} u_0(x)|x|^{\frac{2}{p-1}} =0,$$
Quittner \cite[Theorem 1.2]{Quittner08} showed that there are no global positive radial solutions with self-similar time decay $t^{-\frac{1}{p-1}}$.
From this point, for $p=p_S$, Fila and King \cite{FilaKing12} predicted formally, via matched asymptotics, the possible decaying/growing rate (in time) of threshold solutions to \eqref{heat-eq} with the radial initial value $u_0$ satisfying
\begin{equation}\label{initial-data}
	\lim\limits_{r\rightarrow \infty} r^{\gamma} u_0(r) = A \mbox{ \ for some \ } A>0 ~\mbox{ and }~ \gamma>\frac{n-2}{2}.
\end{equation}
They conjectured that the threshold solution $u$ of \eqref{heat-eq} with initial value $u_0$ should satisfy
\begin{equation*}
	\lim\limits_{t\rightarrow \infty} \frac{\| u (\cdot,t) \|_{L^\infty(\R^n)}}{ \varphi(t;n,\gamma) } = C
\end{equation*}
for some positive constant $C$ depending on $n$ and $u_0$,
where $\varphi(t;n,\gamma)$ is given as:
\begin{table}[H]
	\begin{center}
		\begin{tabular}{ | c | c | c | c | }
			\hline
			& $\frac{n-2}{2} < \gamma <2$ & $ \gamma=2$ & $ \gamma>2$ \\   \hline
			$n=3$ & $t^{\frac{ \gamma-1}{2}}$ & $t^{\frac{1}{2}} (\ln t)^{-1}$ & $t^{\frac{1}{2}}$
			\\    \hline
			$n=4$ & $t^{-\frac{2- \gamma}{2} } \ln t$ & $1$ & $\ln t$
			\\   \hline
			$n=5$ & $t^{-\frac{3(2- \gamma)}{2}}$ & $(\ln t)^{-3}$ & $1$ \\
			\hline
			$n\geq 6$ & $1$ & $1$ & $1$ \\
			\hline
		\end{tabular}
	\end{center}
	\caption{Fila-King \cite[Conjecture 1.1]{FilaKing12}}\label{FKD}
\end{table}

The case $\gamma>1,~ n=3$ was answered affirmatively by del Pino, Musso and Wei \cite{173D}, where the infinite time blow-up solutions were constructed by the gluing method. The infinite time blow-up solutions are also called grow-up/growing solutions in some literature. The case $\gamma>2,~ n=4$ was solved in \cite{infi4d} recently. Due to the intimate connection with the critical Fujita equation in $\R^4$, the trichotomy dynamics of the 1-equivariant harmonic map heat flow was studied in \cite{tri}.
See also Galaktionov-King \cite{King03JDE}, Cort\'azar-del Pino-Musso \cite{Green16JEMS}, del Pino-Musso-Wei-Zheng \cite{del2018sign} (sign-changing solutions), and Ageno-del Pino \cite{ageno2023infinite} for their counterparts in the case of the bounded domain, where the Dirichlet boundary plays a significant role in determining the blow-up dynamics.

\medskip

This paper addresses the case for $n=5$ in Table \ref{FKD}. We first introduce some notations that we will use throughout the paper.

\medskip

\noindent \textbf{Notations:}
\begin{itemize}
	
	\item We write $a\lesssim b$ (respectively $a \gtrsim b$) if there exists a  constant $C > 0$ independent of $t_0$ such that $a \le  Cb$ (respectively $a \ge  Cb$). Set $a \sim b$ if $b \lesssim a \lesssim b$. Denote $f_1=O(f_2)$ if $|f_1| \lesssim f_2$.
	
	\item For any $x\in \mathbb{R}^n$ with $|x|=\big( \sum\limits_{i=1}^n x_i^2 \big)^{1/2}$,  the Japanese bracket denotes $\langle x\rangle = \sqrt{|x|^2+1}$.
	
	\item
	For any $c \in \R$, we use the notation $c-$ (respectively $c+$) to denote a constant less (respectively greater) than $c$ and can be chosen arbitrarily close to $c$.
	
	\item  $\eta(x)$ is a smooth cut-off function satisfying  $\eta(x)=1$ for $|x|\le 1$, $\eta(x)=0$ for $|x|\ge 2$, and $0\le \eta(x) \le 1$ for all $x\in \mathbb{R}^n$.

\end{itemize}

The main theorem is stated below.
\begin{theorem}\label{5d-main-th}
	Consider
	\begin{equation}\label{u-eq-5d}
		\pp_t u=\Delta u+ |u|^{\frac{4}{3}} u
		\mbox{ \ in \ }
		\mathbb{R}^5 \times (t_0,\infty) .
	\end{equation}
	Given constants $\gamma>\frac{3}{2}$ and $D_0,~D_1$ satisfying $0<D_0\le D_1 <2D_0$, for $t_0$ sufficiently large,
	then there exists a positive solution $u$ of the form
	\begin{equation}\label{u-behavior}
		u =
		15^{\frac{3}{4}}
		\mu^{-\frac{3}{2}}
		\left( 1+
		\left|  \frac{x-\xi}{\mu} \right|\right)^{-\frac{3}{2}}
		\eta\left( \frac{x-\xi}{\sqrt{t}}\right)
		+
		O\left( t^{-\frac{\tilde{\gamma}}{2}} R^5 \ln^2 R \right),
		\quad
		R = \ln \ln t  ,
	\end{equation}
	where $\tilde{\gamma} = \min\left\{ \gamma,~ 3- \right\} $, $\mu=\mu(t),~\xi=\xi(t) \in C^1[t_0,\infty)$ satisfy
	\begin{equation}
		\mu \sim
		\begin{cases}
			t^{2- \gamma},& \gamma<2\\
			\ln^2 t , &\gamma=2\\
			1, &\gamma>2
		\end{cases},
		\qquad
		|\xi| \lesssim
		R^{-\frac{7}{4}}
		\begin{cases}
			t^{2- \gamma},& \gamma<2\\
			\ln^2 t , &\gamma=2\\
			1, &\gamma>2 .
		\end{cases}
	\end{equation}
	In particular, $\| u(\cdot,t)\|_{L^\infty(\mathbb{R}^5)}= 15^{\frac{3}{4}}
	\mu^{-\frac{3}{2}} \left(1+ O\left(t^{ \max\left\{ 3-2\gamma ,- \frac{\tilde{\gamma}}{2}\right\} } \ln^4 t \right)  \right)$. Moreover, the initial value satisfies
	\begin{equation*}
		u(x,t_0) =  \left(4\pi t_0\right)^{-\frac 52} \int_{\RR^5}
		e^{-\frac{|x-z|^2}{4t_0 } }
		\psi_0(z) dz
		\quad
		\mbox{ \ for \ } |x| > 4t_0^{\frac{1}{2}}
	\end{equation*}
	with an arbitrary function $\psi_0(x)$ satisfying $D_0 \langle x \rangle^{-\gamma} \le \psi_0(x) \le  D_1 \langle x \rangle^{-\gamma} $. Furthermore, if $D_0=D_1$, we have
	\begin{equation}\label{u-limit}
		\lim\limits_{|x|\rightarrow\infty}
		\langle x \rangle^{\gamma} u(x,t_0) = D_0 .
	\end{equation}
	
\end{theorem}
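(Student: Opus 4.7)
The proof would proceed by the inner–outer parabolic gluing scheme, in the spirit of the 3D and 4D critical constructions \cite{173D,infi4d}, adapted to the 5D energy-critical case. In this dimension the Aubin-Talenti profile $U(y)=15^{3/4}(1+|y|^2)^{-3/2}$ decays like $|y|^{-3}$ at infinity, so its scaling mode $Z_0=y\cdot\nabla U+\tfrac 32 U$ and its translation modes $Z_i=\pp_{y_i}U$ both lie in $L^2(\RR^5)$, which allows the standard modulation analysis to be carried out.

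I would first build an approximate solution of the form
\begin{equation*}
u_*(x,t) = U_{\mu,\xi}(x,t)\,\eta\!\left(\tfrac{x-\xi}{\sqrt t}\right) + \Psi(x,t),
\end{equation*}
where $U_{\mu,\xi}=\mu^{-3/2}U((x-\xi)/\mu)$ is the moving rescaled bubble and $\Psi$ is the heat propagation of (a cut-off version of) $\psi_0$. A direct computation of the error
\begin{equation*}
\SSS[u_*] := -\pp_t u_* + \Delta u_* + |u_*|^{\frac 43}u_*
\end{equation*}
shows that, inside the inner region $|x-\xi|\lesssim\mu R$, the leading contributions are modulation terms of size $\mu^{-5/2}\dot\mu\,Z_0(y)$ and $\mu^{-5/2}\dot\xi\cdot Z_i(y)$ coming from $\pp_t U_{\mu,\xi}$, a cross-nonlinearity term $\tfrac 73 U_{\mu,\xi}^{4/3}\Psi$, and a residual of order $\mu^{-3/2} t^{-\gamma/2}$ reflecting the value of $\Psi$ near the concentration point. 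Balancing these sizes already predicts $\mu(t)\sim t^{2-\gamma},~\ln^2 t,~1$ in the three regimes of Table \ref{FKD}.

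Next I would look for the full solution in the form $u = u_* + \tilde\eta(x,t)\phi + \psi$, with $\tilde\eta$ a cutoff supported on $|x-\xi|\lesssim\mu R$ and $R=\ln\ln t$, which decouples into an inner equation
\begin{equation*}
\mu^2\pp_t\phi = \Delta_y\phi + \tfrac 73 U^{\frac 43}(y)\phi + H[\mu,\xi,\psi](y,t)
\end{equation*}
in the self-similar variables $y=(x-\xi)/\mu$, and an outer equation
\begin{equation*}
\pp_t\psi = \Delta\psi + G[\mu,\xi,\phi](x,t) \quad \text{on } \RR^5\times(t_0,\infty).
\end{equation*}
The outer problem is treated by Duhamel's formula and Gaussian heat-kernel estimates in weighted $L^\infty$-spaces, exploiting that $\langle x\rangle^{-\gamma}$ data propagates at a rate no better than $t^{-\tilde\gamma/2}$, where $\tilde\gamma=\min\{\gamma,3-\}$ reflects the ceiling imposed by the bubble's own $|y|^{-3}$ tail. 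The inner linear problem for $L_0=\Delta_y+\tfrac 73 U^{\frac 43}$ is solved, after decomposition into radial and vector spherical harmonic modes, provided the right-hand side is $L^2(\RR^5)$-orthogonal to each of $Z_0,Z_1,\dots,Z_5$.

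The main obstacle, and where the 5D geometry enters decisively, is the analysis of these six orthogonality conditions, which take the form of nonlocal Volterra-type equations for $(\mu,\xi)$ whose kernels encode how the outer field is felt at the concentration point. The scaling condition gives schematically $c\,\mu^{1/2}\dot\mu\sim\Psi(\xi,t)\sim t^{-\gamma/2}$ at leading order, which, once integrated, yields exactly the three asymptotic regimes in the statement: the algebraic rate $\mu\sim t^{2-\gamma}$ when $\gamma<2$, a logarithmic resonance $\mu\sim\ln^2 t$ at the critical value $\gamma=2$, and stabilisation to a positive constant when $\gamma>2$ because the forcing is then integrable in time. The translation conditions similarly deliver the refined bound $|\xi|\lesssim R^{-7/4}\mu$. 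With $(\mu,\xi)$ fixed in this way, the remainder of the construction is a contraction argument in weighted Hölder norms carrying the weight $t^{-\tilde\gamma/2}R^5\ln^2 R$ in the outer space and a matching self-similar weight for $\phi$. The initial-data representation for $|x|>4t_0^{1/2}$ then follows because $u$ coincides there with the linear heat evolution of $\psi_0$, the bubble contribution being negligible in that region, and the exact limit \eqref{u-limit} when $D_0=D_1$ is inherited from the corresponding asymptotics of $\Psi$.
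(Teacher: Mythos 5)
Your outline follows the same strategy as the paper: inner--outer gluing with a moving bubble plus the heat propagation of $\psi_0$, Duhamel/Gaussian estimates for the outer part, a linear inner theory requiring orthogonality to $Z_1,\dots,Z_6$, and modulation ODEs that fix $\mu,\xi$. Your observation that in $\R^5$ the scaling mode $\tfrac32 U+y\cdot\nabla U\sim|y|^{-3}$ is $L^2$ (unlike $n=3,4$) is correct and relevant; it is why the truncated integrals $A(R)$ converge to a nonzero constant with only $O(R^{-2})$ corrections.

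There is, however, a concrete error in the central modulation equation. You write ``$c\,\mu^{1/2}\dot\mu\sim\Psi(\xi,t)\sim t^{-\gamma/2}$.'' Integrating this gives $\mu^{3/2}\sim t^{1-\gamma/2}$, hence $\mu\sim t^{(2-\gamma)/3}$ for $\gamma<2$, which is \emph{not} the rate $t^{2-\gamma}$ you then assert. The correct leading balance, obtained by projecting $-\mu^{-5/2}\dot\mu Z_{6}(y)+\tfrac73\mu^{-2}U^{4/3}(y)\Psi_0$ onto $Z_6$, is $\mu\dot\mu\sim\mu^{3/2}\Psi_0(0,t)$, i.e.\ $\mu^{-1/2}\dot\mu\sim\Psi_0(0,t)$, equivalently $\dot\mu\sim\mu^{1/2}\Psi_0(0,t)$. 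Integrating $\tfrac{d}{dt}(\mu^{1/2})\sim t^{-\gamma/2}$ then yields precisely $\mu\sim t^{2-\gamma}$, $\ln^2 t$, $1$ in the three regimes. The sign of the exponent of $\mu$ is the crux of the whole trichotomy, so this slip would derail the derivation if carried through literally. A second, minor, discrepancy: you invoke a contraction argument for the inner and modulation problems, whereas the inner linear map of Proposition~\ref{phi-upper-bdd} loses factors of $R^5\ln R$ and is therefore closed via a compactness/Schauder fixed-point argument (as is the modulation system in Proposition~\ref{mu1-xi-prop}); only the outer problem is a contraction in the weighted norm $\|\cdot\|_{\mathrm{out}}$.
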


\begin{remark}
	\noindent
	\begin{itemize}
		\item
		The restriction $D_1 <2D_0$ is due to a technical reason in the derivation process of \eqref{F-int-upp} for the case $\gamma\le 2$.
		
		\item
		The scaling rate/dynamics $\mu$ is derived by balancing the heat flow of the initial value and the Aubin-Talenti bubble via the orthogonal condition \eqref{mu0-eq-original}.
		
		\item Consider $\pp_t u=\pp_{rr}u + \frac{n-1}{r} \pp_r u + u^{\frac{n+2}{n-2}}$, $r>0, ~t>0$ with $n\in (4,6)$. It is possible to deduce similar results by redoing the construction process.
		
		\item The scaling rate with logarithmic correction $t^{k_1}  (\ln t)^{k_2} (\ln\ln t)^{k_3}\cdots$ for some $k_i \in \RR$, $i\in \mathbb Z_+$ with finite multiplicity can be expected when we take the initial value of the form $u_{0}(x) \sim \langle x\rangle^{\gamma_1} \langle\ln\langle x\rangle \rangle^{\gamma_2}
		\langle \ln \langle \ln\langle x\rangle \rangle \rangle^{\gamma_3}\cdots$ for some $\gamma_i\in \RR$, $i\in \mathbb Z_+$.
		
	\end{itemize}
\end{remark}

For $p>p_F$, Lee and Ni \cite[Theorem 3.8]{LeeNi} gave positive global solutions of \eqref{heat-eq} with the decay rate
$$\| u(\cdot,t) \|_{L^\infty(\R^n)} \sim t^{-k} ~\mbox{ for any }~ k\in\left[\frac1{p-1},~\frac{n}{2}\right] .
$$
In particular, for $n=5$ and $p=p_S$, $k\in\left[\frac34, \frac52\right]$.

Theorem \ref{5d-main-th} implies a direct consequence that somewhat expands the picture of global dynamics of positive solutions in the critical case $p=p_S$ in $\R^5$ with algebraic decay rate:
\begin{corollary}
	For $n=5,~ p=\frac{7}{3}$, for all $k\in [0,\frac52]$, there exists a global positive solution of \eqref{heat-eq} with the rate
	$\| u(\cdot,t) \|_{L^\infty(\R^5)} \sim t^{-k} $ as $t\to \infty$.
\end{corollary}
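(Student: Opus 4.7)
The plan is to combine Theorem \ref{5d-main-th} with the Lee-Ni result \cite{LeeNi} quoted just before the corollary, partitioning the target range $[0, 5/2]$ at the self-similar exponent $k = \frac{1}{p-1} = \frac{3}{4}$. For $k \in [\frac{3}{4}, \frac{5}{2}]$, the desired positive global solution with $\|u(\cdot,t)\|_{L^\infty(\R^5)} \sim t^{-k}$ is precisely what Lee-Ni's Theorem 3.8 produces (with $n=5$, $\frac{1}{p-1}=\frac{3}{4}$, $\frac{n}{2}=\frac{5}{2}$). So only the complementary range $k \in [0, \frac{3}{4})$ remains, and this is where Theorem \ref{5d-main-th} is invoked.

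For $k \in (0, \frac{3}{4})$, I would set $\gamma := 2 - \frac{2k}{3}$, which lies in $(\frac{3}{2}, 2)$. Choosing any admissible constants $0 < D_0 \le D_1 < 2 D_0$ and $t_0$ sufficiently large, Theorem \ref{5d-main-th} delivers a positive solution with $\mu \sim t^{2-\gamma}$ and
\[
\|u(\cdot,t)\|_{L^\infty(\R^5)} = 15^{3/4}\mu^{-3/2}\bigl(1 + O(t^{\max\{3 - 2\gamma,\, -\tilde\gamma/2\}} \ln^4 t)\bigr).
\]
Since $\gamma > \frac{3}{2}$ forces $3 - 2\gamma < 0$, and $\tilde\gamma = \min\{\gamma, 3-\} > 0$, the parenthetical factor is trapped between two positive constants for large $t$, whence $\|u(\cdot,t)\|_{L^\infty(\R^5)} \sim \mu^{-3/2} \sim t^{-3(2-\gamma)/2} = t^{-k}$. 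For the leftmost endpoint $k = 0$, I would apply the theorem with any $\gamma > 2$, so that $\mu \sim 1$ and $\|u(\cdot,t)\|_{L^\infty(\R^5)} \sim 1 = t^{0}$.

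There is no deep obstacle here: the corollary amounts to a bookkeeping translation of the parameter $\gamma$ in Theorem \ref{5d-main-th} into the decay exponent $k$, followed by the observation that the newly covered range $[0, \frac{3}{4})$ glues onto Lee-Ni's range $[\frac{3}{4}, \frac{5}{2}]$ at the self-similar rate. The one point worth explicit mention is that Theorem \ref{5d-main-th} supplies two-sided control on $\|u(\cdot,t)\|_{L^\infty(\R^5)}$ via the explicit leading term $15^{3/4}\mu^{-3/2}$ with a vanishing relative error, which is exactly what is needed to obtain $\sim$ rather than merely $\lesssim$ or $\gtrsim$.
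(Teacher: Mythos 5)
Your proof is correct and matches the paper's intended (though unwritten) argument: the text immediately preceding the corollary quotes Lee--Ni for $k\in[\tfrac34,\tfrac52]$ and then invokes Theorem~\ref{5d-main-th}, whose scaling law $\mu\sim t^{2-\gamma}$ for $\gamma\in(\tfrac32,2)$ and $\mu\sim1$ for $\gamma>2$ fills in $k\in[0,\tfrac34)$ exactly as you describe. Your explicit choice $\gamma=2-\tfrac{2k}{3}$ and the observation that the error factor $1+O(t^{\max\{3-2\gamma,-\tilde\gamma/2\}}\ln^4 t)$ stays bounded away from $0$ and $\infty$ are precisely what make the two-sided bound $\sim t^{-k}$ go through.
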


The construction of Theorem \ref{5d-main-th} is done by the gluing method recently developed in \cite{Green16JEMS,17HMF}. It is a rather versatile and systematic tool that can be used to investigate the singularity formation for various evolution PDEs, and we refer to \cite{Green16JEMS,17HMF,173D,17halfHMF,18Euler,Eulerfrog} and the references therein.

\medskip

The rest of this paper is devoted to the proof of Theorem \ref{5d-main-th}.

\medskip

\section{Approximate solutions and the gluing system}

Consider the critical heat equation
\begin{equation}\label{u-eq}
		\pp_t u=\Delta u+\left|u\right|^{\frac{4}{n-2}}u
		\mbox{ \ in \ }
		\mathbb{R}^n \times (t_0,\infty) .
\end{equation}
The unique positive solution (up to translations and dilations) of the stationary equation $\Delta u+u^{\frac{n+2}{n-2}} =0$,  is given by the Aubin-Talenti solution
\begin{equation*}
U(x)=\alpha_n (1+|x|^2)^{-\frac{n-2}{2}},
\quad \alpha_n=\left[n(n-2)\right]^{\frac{n-2}{4}}.
\end{equation*}
The corresponding linearized operator $\Delta+\frac{n+2}{n-2}U^{\frac{4}{n-2}}$ has bounded kernels
\begin{equation*}
 Z_i(x)= \partial_{x_i} U(x),
 \quad
   i=1,\cdots, n, \quad		Z_{n+1}(x)=\frac{n-2}{2}U(x)+
   x\cdot \nabla U(x).
\end{equation*}\par
The leading term of the solution to \eqref{u-eq} is taken as the following form
\begin{equation*}
	u_1(x,t)= \mu^{-\frac{n-2}{2}} U \left(y \right)
	\eta\left(\tilde{y} \right)
	+\Psi_0(x,t),
	\mbox{ \ where \ }
y:=\frac{x-\xi}{\mu},
\quad \tilde{y} := \frac{x-\xi}{\sqrt{t}} ,
\end{equation*}
$\mu=\mu(t)>0,~ \xi=\xi(t)\in C^1[t_0,\infty)$ will be determined later, and
\begin{equation*}
	\Psi_0(x,t) =  \left(4\pi t\right)^{-\frac n2} \int_{\RR^n}
	e^{-\frac{|x-z|^2}{4t} }
	\psi_0(z) dz,
\end{equation*}
where $D_0 \langle x \rangle^{-\gamma} \le \psi_0(x) \le  D_1 \langle x \rangle^{-\gamma} $ with some constants $0<D_0\le D_1$. Obviously, $\Psi_0>0$ and
\begin{equation*}
\partial_t \Psi_0 = \Delta \Psi_0,
\quad
\Psi_0(\cdot,0) = \psi_0 .
\end{equation*}
We first give a lemma concerning a precise estimate related to $\Psi_0$.
\begin{lemma}\label{Cauchy-est-x=0}
Given $n>0$, $\gamma\in \mathbb{R}$, $t\ge 1$, then
	\begin{equation}
		\left(4\pi t\right)^{-\frac n2} \int_{\RR^n}
		e^{-\frac{|y|^2}{4t} }
		\langle y \rangle^{-\gamma} dy
		=
		v_{n,\gamma}(t) (C_{n,\gamma} + g_{n,\gamma}(t)),
	\end{equation}
	where
	\begin{equation}\label{vn-gamma-def}
		v_{n,\gamma}(t)
		=
		\begin{cases}
			t^{-\frac{\gamma}{2}},
			&
			\gamma <n
			\\
			t^{-\frac n2} \ln (1+t),
			&
			\gamma =n
			\\
			t^{-\frac n2},
			&
			\gamma >n,
		\end{cases}
	\end{equation}
	\begin{equation}\label{Cn-gn-def}
		C_{n,\gamma} =
		\begin{cases}
			(4\pi)^{-\frac n2} \int_{\RR^n}
			e^{-\frac{|z|^2}{4 } }
			|z|^{- \gamma } d z,
			&
			\gamma<n
			\\
			\left(4\pi \right)^{-\frac n2} \frac{1}{2} \left|S^{n-1}\right| ,
			&
			\gamma=n
			\\
			\left(4\pi \right)^{-\frac n2} \int_{\RR^n}
			\langle y \rangle^{-\gamma} dy ,
			&
			\gamma>n
		\end{cases},
		\qquad
		g_{n,\gamma}(t)
		=
		O\Bigg(
		\begin{cases}
			t^{-1},   & \gamma <n-2
			\\
			t^{-1} \langle \ln t \rangle,  &
			\gamma =n-2
			\\
			t^{ \frac{\gamma -n }{2}},  &
			n-2 < \gamma <n
			\\
			\left( \ln (1+t)  \right)^{-1},
			&
			\gamma = n
			\\
			t^{\frac{n-\gamma}{2}},
			&
			n<\gamma<n+2
			\\
			t^{-1}  \langle \ln t \rangle,
			&
			\gamma=n+2
			\\
			t^{-1} ,
			&
			\gamma>n+2
		\end{cases}
		\Bigg).
	\end{equation}
	
\end{lemma}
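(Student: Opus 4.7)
The plan is to split the integral according to the natural parabolic scale $|y|\sim\sqrt{t}$ and treat the three regimes $\gamma<n$, $\gamma=n$, $\gamma>n$ separately, since the leading constant $C_{n,\gamma}$ takes three qualitatively different forms. In each regime, once the main term is extracted, everything reduces to one-dimensional polar integrals that I can evaluate explicitly.

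For $\gamma<n$ I would rescale $y=\sqrt{t}\,z$, which recasts the integral as
\begin{equation*}
(4\pi)^{-n/2}\int_{\RR^n} e^{-|z|^2/4}(1+t|z|^2)^{-\gamma/2}\,dz.
\end{equation*}
On $\{|z|>t^{-1/2}\}$ I would Taylor expand $(1+1/(t|z|^2))^{-\gamma/2}-1=O(1/(t|z|^2))$ to compare the integrand with its formal limit $t^{-\gamma/2}|z|^{-\gamma}$, producing a remainder controlled by $t^{-1-\gamma/2}\int_{|z|>t^{-1/2}} e^{-|z|^2/4}|z|^{-\gamma-2}\,dz$. Converting to polar coordinates, this integral is $O(1)$, $O(\ln t)$, or $O(t^{(\gamma-n+2)/2})$ according to whether $\gamma<n-2$, $\gamma=n-2$, or $n-2<\gamma<n$; multiplying through by $t^{\gamma/2}$ recovers exactly the first three lines of $g_{n,\gamma}$. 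The inner piece $\{|z|<t^{-1/2}\}$ contributes only $O(t^{(\gamma-n)/2})$ after the same normalization, which is always dominated by the outer error.

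For $\gamma=n$ I would not rescale but split at $|y|=1$; on $\{|y|\ge 1\}$ I replace $\langle y\rangle^{-n}$ by $|y|^{-n}$ modulo an integrable $O(|y|^{-n-2})$ correction, then substitute $s=|y|^2/(4t)$ to arrive at
\begin{equation*}
(4\pi t)^{-n/2}\cdot\tfrac12|S^{n-1}|\int_{1/(4t)}^{\infty}\frac{e^{-s}}{s}\,ds.
\end{equation*}
Splitting this exponential integral at $s=1$ yields $\int_{1/(4t)}^{\infty} e^{-s}s^{-1}\,ds=\ln(1+t)+O(1)$, giving the leading term $v_{n,n}(t)C_{n,n}$ with relative error $O(1/\ln(1+t))$. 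For $\gamma>n$ the weight $\langle y\rangle^{-\gamma}$ is integrable, so I would take the leading term to be $(4\pi t)^{-n/2}\int\langle y\rangle^{-\gamma}\,dy=v_{n,\gamma}(t)C_{n,\gamma}$ and bound the remainder $(4\pi t)^{-n/2}\int(e^{-|y|^2/(4t)}-1)\langle y\rangle^{-\gamma}\,dy$ using $|e^{-s}-1|\le\min(s,1)$ together with a split at $|y|=\sqrt{t}$. The two resulting polar integrals combine to $O(t^{-1})$, $O(t^{-1}\ln t)$, or $O(t^{(n-\gamma)/2})$ respectively for $\gamma>n+2$, $\gamma=n+2$, and $n<\gamma<n+2$, matching the last three lines of $g_{n,\gamma}$.

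The proof is essentially a collection of polar integrations and contains no deep idea; the only point demanding care is the three borderline cases $\gamma\in\{n-2,\,n,\,n+2\}$ where the exponent in the critical polar integral becomes exactly $-1$ and forces the logarithmic corrections listed in $g_{n,\gamma}$. Keeping the seven sub-cases uniformly aligned with the table is the main bookkeeping obstacle.
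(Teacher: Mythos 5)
Your proposal is correct and follows essentially the same route as the paper: rescale (or compare to the formal limit) and estimate the error via polar integrals split at the parabolic scale, with the three borderline cases $\gamma\in\{n-2,n,n+2\}$ producing the logarithms. The only cosmetic difference is in the $\gamma=n$ case, where you split at $|y|=1$ and reduce to the exponential integral $\int_{1/(4t)}^\infty e^{-s}s^{-1}\,ds$, whereas the paper splits at $|y|=\sqrt t$ and computes $\int_{|y|<\sqrt t}\langle y\rangle^{-n}\,dy$ directly; the two are equivalent.
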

The proof of Lemma \ref{Cauchy-est-x=0} is postponed to Appendix \ref{Cauchy-est-App}.

\medskip

Hereafter, we always assume  $t_0\ge 1$ is sufficiently large and $t\ge t_0$.
By Lemma \ref{Cauchy-est-x=0}, we have
\begin{equation}\label{Psi0-0-est}
D_0 v_{n,\gamma}(t) \left(C_{n,\gamma} + g_{n,\gamma}(t) \right) \le \Psi_0(0,t) \le D_1 v_{n,\gamma}(t) \left(C_{n,\gamma} + g_{n,\gamma}(t) \right)  .
\end{equation}
By similar calculation, we have
\begin{equation}\label{nab-Psi0-upp}
	\|\nabla \Psi_0(\cdot, t)\|_{L^{\infty}(\mathbb{R}^n)} \lesssim t^{-\frac{1}{2}}
	v_{n,\gamma}(t) .
\end{equation}
By \cite[Lemma A.3]{infi4d},
\begin{equation}\label{Psi0-rough-upp}
	\Psi_0(x,t) \lesssim  	t^{-\frac{\tilde{\gamma}}{2}}	
	\1_{ |x|\le t^{\frac 12} }
	+
	| x|^{- \tilde{\gamma} }
	\1_{ |x| > t^{\frac 12} } ,
\end{equation}
where $\tilde{\gamma}$ is defined as
\begin{equation}\label{til-gamma}
	\tilde{\gamma}:= \min\left\{ \gamma, 3- \right\} .
\end{equation}

Define the error of $f$ as
\begin{equation*}
	E[f]:=- \pp_t f+\Delta f+|f|^{\frac{4}{n-2}} f .
\end{equation*}
Straightforward computation implies
\begin{equation*}
E[u_1] =
\mu^{-\frac{n}{2}} \dot{\mu} Z_{n+1}(y) \eta\left(\tilde{y}  \right)
+
\mu^{-\frac{n}{2}} \dot{\xi} \cdot
\left( \nabla U \right)\left( y \right) \eta\left( \tilde{y}  \right)
+
\mathcal{E}_{\eta}
+
|u_1|^{\frac{4}{n-2}} u_1
-
\mu^{-\frac{n+2}{2}}  U\left( y \right)^{\frac{n+2}{n-2}} \eta\left( \tilde{y}\right) ,
\end{equation*}
where
\begin{equation}\label{E-eta-def}
\mathcal{E}_{\eta}:=   \mu^{-\frac{n-2}{2}} U(y)
\left( 2^{-1} t^{-1} \tilde{y}  + t^{-\frac{1}{2}} \dot{\xi}\right)\cdot \left( \nabla \eta \right)\left( \tilde{y}  \right)
+
2\mu^{-\frac{n}{2}} t^{-\frac{1}{2}}
\left( \nabla U \right)\left(y\right) \cdot
\left( \nabla \eta\right)\left( \tilde{y}\right)
+
\mu^{-\frac{n-2}{2}} t^{-1} U\left( y\right) \left( \Delta \eta\right)\left( \tilde{y}\right) .
\end{equation}

\medskip

We look for an exact solution $u$ of \eqref{u-eq} in the form
\begin{equation}\label{u-def}
	u =u_1 + \psi(x,t)+\mu^{-\frac{n-2}{2}}\phi\left(\frac{x-\xi}{\mu},t\right) \eta_{R} ,
\quad
\eta_{R} :=\eta\left(\frac{x-\xi}{\mu R }\right),
\quad
 R=R(t)= \ln \ln t .
\end{equation}
We make the ansatz
\begin{equation}\label{self-similar-ansatz}
2\mu R \le \sqrt{t}/9 .
\end{equation}

Direct calculation deduces that
\begin{equation*}
	\begin{aligned}
E[u] = \ &
\left(
\mu^{-\frac{n}{2}} \dot{\mu} Z_{n+1}(y)
+
\mu^{-\frac{n}{2}} \dot{\xi} \cdot
\left( \nabla U \right)\left( y \right)
\right)
 \eta\left( \tilde{y}  \right)
+
\mathcal{E}_{\eta}
+
\mu^{-\frac{n+2}{2}} U(y)^{\frac{n+2}{n-2}}
\left( \eta(\tilde{y})^{\frac{n+2}{n-2}}
-
\eta\left( \tilde{y}\right)
\right)
\\
&
+
\mathcal{N}\left[\psi,\phi,\mu,\xi\right]
+
\frac{n+2}{n-2} \mu^{-2}
U(y)^{\frac{4}{n-2}} \eta(\tilde{y})^{\frac{4}{n-2}}
\left(\Psi_0 + \psi + \mu^{-\frac{n-2}{2}} \phi(y,t) \eta_R \right)
\\
		&
		-\pp_t \psi +\Delta \psi
		-	\mu^{-\frac{n-2}{2}} \pp_t \phi (y,t) \eta_R
		+
		\mu^{-\frac{n+2}{2}} \Delta_y \phi(y, t) \eta_R
		+  \Lambda_1\left[\phi,\mu,\xi\right]
		+
		\Lambda_2\left[\phi,\mu,\xi\right],
	\end{aligned}
\end{equation*}
where
\begin{equation}\label{Lambda1-phi}
	\begin{aligned}
		\Lambda_1\left[\phi,\mu,\xi\right]:= \ & \mu^{-\frac{n+2}{2}} R^{-2} \phi(y, t)  \left( \Delta
		\eta\right)(\frac{y}{R})
		+
		2 \mu^{-\frac{n+2}{2}}  R^{-1}  \nabla_y \phi(y, t) \cdot \left(\nabla \eta \right)(\frac{y}{R})
		\\
		&
		+
		\mu^{-\frac{n-2}{2}} \phi( y, t)
		\left( \nabla \eta\right)(\frac{y}{R}) \cdot \left( \frac{\dot{\xi}}{\mu R}
		+ \frac{y}{R} \frac{\pp_t(\mu R)}{\mu R } \right)  ,
	\end{aligned}
\end{equation}
\begin{equation}\label{Lambda2-phi}
	\Lambda_2\left[\phi,\mu,\xi\right]:=
	\mu^{-\frac n2} \dot{\mu}  \left(\frac{n-2}{2} \phi(y,t)
	+ y \cdot \nabla_y \phi (y,t) \right) \eta_R
	+ \mu^{-\frac{n}{2}} \dot{\xi} \cdot \nabla_y \phi( y, t)   \eta_R  ,
\end{equation}
\begin{equation}\label{N-def}
	\mathcal{N}\left[\psi,\phi,\mu,\xi\right]:=
|u|^{\frac{4}{n-2}} u
-
\mu^{-\frac{n+2}{2}} U(y)^{\frac{n+2}{n-2}} \eta(\tilde{y})^{\frac{n+2}{n-2}}
-
\frac{n+2}{n-2} \mu^{-2}
U(y)^{\frac{4}{n-2}} \eta(\tilde{y})^{\frac{4}{n-2}}
\left(\Psi_0 + \psi + \mu^{-\frac{n-2}{2}} \phi(y,t) \eta_R \right)   .
\end{equation}

In order to make $E[u]=0$, it suffices to solve the following gluing system.
\\
\textbf{The outer problem:}
\begin{equation}\label{outer-problem}
	\partial_t\psi =\Delta \psi+\mathcal{G}\left[\psi,\phi,\mu,\xi\right]
	\mbox{ \ in \ } \R^n \times (t_0,\infty),
	\quad
	\psi(\cdot,t_0)=0 \mbox{ \ in \ }  \R^n ,
\end{equation}
where
\begin{equation}\label{g}
	\begin{aligned}	  & \mathcal{G}\left[\psi,\phi,\mu,\xi\right] :=   \Lambda_1\left[\phi,\mu,\xi\right]
		+
		\Lambda_2\left[\phi,\mu,\xi\right]
		+
		\left(
		\mu^{-\frac{n}{2}} \dot{\mu} Z_{n+1}(y)
		+
		\mu^{-\frac{n}{2}} \dot{\xi} \cdot
		\left( \nabla U \right)\left( y \right)
		\right) \eta\left(\tilde{y}  \right) \left( 1-\eta_R \right)
		+
		\mathcal{E}_{\eta}
		\\
		&
		+
		\mu^{-\frac{n+2}{2}} U(y)^{\frac{n+2}{n-2}}
		\left( \eta(\tilde{y})^{\frac{n+2}{n-2}}
		-
		\eta\left( \tilde{y}\right)
		\right)
		 +
		\mathcal{N}\left[\psi,\phi,\mu,\xi\right]
		+
		\frac{n+2}{n-2} \mu^{-2}
		U(y)^{\frac{4}{n-2}} \eta(\tilde{y})^{\frac{4}{n-2}}
		\left(\Psi_0 + \psi   \right)
		\left( 1-\eta_R \right)
	 ;
	\end{aligned}
\end{equation}
\textbf{The inner problem:}
\begin{equation}\label{inner-problem} \mu^2	\partial_t\phi =\Delta_y \phi
	+\frac{n+2}{n-2}U(y)^{\frac{4}{n-2}}\phi
	+\mathcal{H}\left[\psi,\mu,\xi\right]
	\quad
	\mbox{ \ for \ } t>t_0, \quad y\in B_{4R(t)},
\end{equation}
where
\begin{equation}\label{H-def}
		\mathcal{H}\left[\psi,\mu,\xi\right] :=
\mu\dot{\mu} Z_{n+1}(y)
+\mu \dot{\xi} \cdot \left(\nabla U\right)(y)
+\frac{n+2}{n-2}\mu^{\frac{n-2}{2}} U(y)^{\frac{4}{n-2}}
\left(
\Psi_0(\mu y+\xi,t)
+
\psi(\mu y+\xi, t)
\right)
		.
\end{equation}

We introduce the new time variable
\begin{equation}\label{tau-def}
	\tau=\tau(t) :=\int_{t_0}^t \mu^{-2}(s) ds + C_\tau t_0 \mu^{-2}(t_0), \quad
	\tau_0 :=\tau(t_0),
\end{equation}
with a sufficiently large constant $C_\tau$ independent of $t_0$. Then
\eqref{inner-problem} can be rewritten as
\begin{equation}\label{inner-tau-eq}
	\partial_{\tau}\phi=\Delta_y \phi(y, t(\tau))
+\frac{n+2}{n-2}U(y)^{\frac{4}{n-2}}\phi(y,t(\tau))
+\mathcal{H}\left[\psi,\mu,\xi\right](y,t(\tau))
\quad
\mbox{ \ for \ }
\tau>\tau_0, \quad  y\in B_{4R(t(\tau))}  .
\end{equation}

\medskip

\section{Formal analysis of $\mu$ and $\phi$}

Hereafter, we take $n=5$. As the leading term of $\mu$, $\mu_0$ is determined by the orthogonal condition
\begin{equation}\label{mu0-eq-original}	\int_{B_{4R}}\left( \mu_0\dot{\mu}_0 Z_{n+1}(y)+\frac{n+2}{n-2}\mu_0^{\frac{n-2}{2}} U(y)^{\frac{4}{n-2}}\Psi_0(0, t) \right) Z_{n+1}(y)dy=0 ,
\end{equation}
which is equivalent to
\begin{equation}\label{mu0-eq}
	\dot{\mu}_0 = A(R) \mu_0^{\frac{n-4}{2}}\Psi_0(0, t)
,
\end{equation}
where
\begin{equation}\label{AR-def}
A(R):= -\frac{n+2}{n-2} \frac{\int_{B_{4R}} U(y)^{\frac{4}{n-2}} Z_{n+1}(y) dy}{\int_{B_{4R}}  Z^2_{n+1}(y) dy }
=
\frac{n-2}{2}
\frac{\int_{\mathbb{R}^n} U(y)^{\frac{n+2}{n-2}} dy}{\int_{\mathbb{R}^n} Z_{n+1}^2(y) dy}
\left(
1+
O\left( R^{\max\left\{ -2,4-n\right\}}\right)
\right)
\sim 1
\end{equation}
for $t\ge M$ with $M$ sufficiently large, and here we have used $$\int_{\mathbb{R}^n} U(y)^{\frac{4}{n-2}} Z_{n+1}(y) dy = -\frac{(n-2)^2}{2(n+2)} \int_{\mathbb{R}^n} U(y)^{\frac{n+2}{n-2}} dy.$$ We take a solution of \eqref{mu0-eq} as
\begin{equation}\label{mu-0-explicit}
 \mu_0(t)=
 \left( \frac{6-n}{2} \int_{M}^{t} A(R(s))\Psi_0(0, s) d s
 \right)^{\frac{2}{6-n} }
 .
\end{equation}
By \eqref{Psi0-0-est}, for $t_0\ge 9M$ sufficiently large,
\begin{equation}\label{mu0-est}
	0<\mu_0(t)\sim \mu_{0*}(t):=
	\begin{cases}
		t^{\frac{2- \gamma}{6-n}},& \gamma<2\\
		( \ln t)^{\frac{2}{6-n}}, &\gamma=2\\
			1, &\gamma>2
	\end{cases},
\quad
	\dot{\mu}_0(t)
	\sim
\mu_{0*}(t)^{\frac{n-4}{2}} v_{n,\gamma}(t)  .
\end{equation}
We make the following ansatz about $\mu$:
\begin{equation}\label{mu1-ansatz}
\mu=\mu_0+\mu_1,
\mbox{ \ where \ }
\mu_1=\mu_1(t) \in C^1[t_0,\infty),
	\quad
|\mu_1 |\le \mu_0/9 ,
\quad
 |\dot{\mu}_1 |\le \mu_{0*}^{\frac{n-4}{2}} v_{n,\tilde{\gamma} } /9 ,
\end{equation}
which implies $\frac{8}{9} \mu_0 \le \mu \le \frac{10}{9} \mu_0$ and the ansatz \eqref{self-similar-ansatz} for $\gamma >\frac{3}{2}$ and $t_0$ sufficiently large.

Recall \eqref{tau-def}, then   $\tau(t)$ and $t$ have the following relation
\begin{equation}\label{tau-est}
	\tau(t)
\sim
\begin{cases}
t^{\frac{2+2\gamma-n}{6-n} },
&
\frac{n-2}{2} < \gamma <2
\\
t ( \ln t)^{\frac{4}{n-6}}, &\gamma=2
\\
t
, &\gamma>2  .
\end{cases}
\end{equation}

By the ansatz \eqref{mu1-ansatz}, roughly speaking, the upper bound of $\mathcal{H}\left[\psi,\mu,\xi\right]$ is determined by
\begin{equation}\label{leading-H-est}
\left| \mu\dot{\mu} Z_{n+1}(y) \right|
+
\left|
\frac{n+2}{n-2}\mu^{\frac{n-2}{2}} U(y)^{\frac{4}{n-2}}\Psi_0(0,t) \right|
\lesssim
\mu_{0*}^{\frac{n-2}{2}} v_{n, \tilde{\gamma} }
\langle y\rangle^{\max\left\{-4, 2-n\right\}}  .
\end{equation}
By $\gamma>\frac{n-2}{2}$ and \eqref{tau-est}, we have
\begin{equation}\label{til-v-def}
		\mu_{0*}^{\frac{n-2}{2}} v_{n, \tilde{\gamma} }
		=
		\begin{cases}
			t^{\frac{n-2-2\gamma}{6-n}}  ,
			& \gamma<2
			\\
			\left( \ln t\right)^{\frac{n-2}{6-n}} t^{-1} ,
			& \gamma =2
			\\
			t^{-\frac{\tilde{\gamma} }{2}} ,
			&   2<\tilde{\gamma}<3
		\end{cases}
		\sim
		\tilde{v}_{n, \tilde{\gamma}}(\tau(t)) ,
\mbox{ \ where \ }
\tilde{v}_{n,\tilde{\gamma} }(\tau )
:=
\begin{cases}
	\tau^{-1},& \frac{n-2}{2} < \gamma<2\\
	(\tau \ln \tau)^{-1} , &\gamma=2\\
	\tau^{-\frac{\tilde{\gamma} }{2}} , & 2<\tilde{\gamma}<3 .
\end{cases}
\end{equation}

Taking $n=5$, we introduce the norm to measure the right hand side of the inner problem
\begin{equation*}
\|f\|_{*}:=\sup_{\tau>\tau_0, ~y\in B_{2R(t(\tau))}}\left[ \tilde{v}_{5,\tilde{\gamma} }(\tau)\right]^{-1}\langle y\rangle^{3}|f(y,\tau)| .
\end{equation*}

The linearized operator $\Delta + \frac{7}{3}U^{\frac{4}{3}}$
has only one positive eigenvalue $\gamma_0>0$ such that
\begin{equation}\label{def-Z0Z0}
	\Delta Z_0 + \frac{7}{3}U^{\frac{4}{3}} Z_0=\gamma_0 Z_0,
\end{equation}
where the corresponding eigenfunction $Z_0 \in L^{\infty}(\R^5)$ is radially symmetric and has exponential decay at spatial infinity.
The following linear theory of the inner problem in dimension $5$ is given by \cite[Proposition 7.2]{infi4d} and \cite[Proposition 7.1]{Green16JEMS}.
\begin{proposition}\label{phi-upper-bdd}
Consider
\begin{equation}\label{inner-linear-eq}
	\begin{cases}
		\pp_{\tau} f=\Delta  f + \frac{7}{3}U(y)^{\frac{4}{3}}  f +h
		&
		 \text{ \ for \ } \tau>\tau_0, \quad y\in B_{4R(t(\tau) ) },
		\\
	f(y,\tau_0)=e_0 Z_0(y)
	&
	\text{ \ for \ }
	y\in B_{4R(t(\tau_0))},
	\end{cases}
\end{equation}
where $h$ satisfies $\|h\|_{*}<\infty$ and
	\begin{equation}\label{inner-orthogonal-cond}
		\int_{B_{4R(t(\tau))}} h(y,\tau)Z_j(y) dy=0, \quad \forall \tau\in(\tau_0, \infty),
		\quad   j=1,2,\cdots, 6,
	\end{equation}
	then  for $\tau_0$ suffciently large, there exists a solution $\left(f,e_0\right)= \left(\mathcal{T}_{\rm{in}}[h],
	\mathcal{T}_{e_0}[h]\right)$ as a linear mapping about $h$, which satisfies the estimates
	\begin{equation*}
		\langle y\rangle|\nabla f|+	|f| \lesssim \tilde{v}_{5,\tilde{\gamma} }(\tau)R^{5}\ln R \langle y\rangle^{-6} \|h\|_{*},\quad |e_0|\lesssim \tilde{v}_{5,\tilde{\gamma} }(\tau_0)R(\tau_0)\|h\|_{*}.
	\end{equation*}

\end{proposition}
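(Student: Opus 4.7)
The plan is to adapt the linear inner theory of \cite{Green16JEMS,infi4d} to the present $n=5$ setting. The argument rests on the spectral structure of $L_0 := \Delta + \frac{7}{3} U^{\frac{4}{3}}$ on $\R^5$: a single positive eigenvalue $\gamma_0$ with exponentially-decaying radial eigenfunction $Z_0$, together with the six-dimensional kernel spanned by the translation/dilation modes $Z_1,\ldots,Z_6$. The six orthogonality conditions on $h$ neutralize the kernel, and the free parameter $e_0$ will be used to kill the unique unstable direction.

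I would first handle the unstable mode. Let $a(\tau) := \|Z_0\|_{L^2}^{-2} \int_{B_{4R(t(\tau))}} f(y,\tau) Z_0(y)\, dy$. Pairing the equation with $Z_0$ and integrating by parts---the boundary contributions at $|y|=4R$ being exponentially small by decay of $Z_0$---yields the scalar ODE
\begin{equation*}
\dot a(\tau) = \gamma_0 a(\tau) + \beta(\tau), \qquad \beta(\tau) := \|Z_0\|_{L^2}^{-2} \int_{B_{4R}} h(y,\tau) Z_0(y)\, dy .
\end{equation*}
From the definition of $\|\cdot\|_*$ and the exponential decay of $Z_0$ one obtains $|\beta(\tau)| \lesssim \tilde{v}_{5,\tilde{\gamma}}(\tau)\|h\|_*$. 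Solving the ODE backward as $a(\tau) := -\int_\tau^\infty e^{-\gamma_0(s-\tau)} \beta(s)\, ds$ produces a bounded mode, and I set $e_0 := a(\tau_0)$; a direct integration gives the desired bound $|e_0|\lesssim \tilde{v}_{5,\tilde{\gamma}}(\tau_0) R(\tau_0)\|h\|_*$, the slowly growing factor $R(\tau_0)$ arising from the borderline regimes (in particular $\gamma=2$) where $\tilde{v}^{-1}\int e^{-\gamma_0(s-\tau)} \tilde{v}(s)\, ds$ is no longer uniformly bounded.

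Next, set $\tilde f := f - a(\tau) Z_0$; a direct computation shows that $\tilde f$ satisfies the same parabolic equation with forcing $\tilde h := h - \beta(\tau) Z_0$, vanishing initial datum $\tilde f(\cdot,\tau_0) = 0$, and orthogonality extended to all of $Z_0,Z_1,\ldots,Z_6$. For this residual I would invoke the inner linear theory as developed in \cite[Prop.~7.1]{Green16JEMS} and \cite[Prop.~7.2]{infi4d}: decompose $\tilde f$ and $\tilde h$ into spherical harmonic modes and analyze each mode separately. Orthogonality to $Z_6$ delivers coercivity in the radial (mode~$0$) sector; orthogonality to $Z_1,\ldots,Z_5$ does the same for mode~$1$; on modes $\geq 2$ the coercivity of $L_0$ is automatic. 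For each mode one then constructs a radial supersolution of the form $\Phi(y,\tau) = \tilde{v}_{5,\tilde{\gamma}}(\tau) R^5 \ln R\, \langle y\rangle^{-6}$ with suitable corrections near $|y|=4R$; the three extra powers of $\langle y\rangle^{-1}$ beyond the $\langle y\rangle^{-3}$ decay of $h$ are produced by the attractive potential $U^{\frac{4}{3}}\sim \langle y\rangle^{-4}$ together with the orthogonality that removes the resonant contributions.

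The main technical obstacle is the barrier construction producing the $\langle y\rangle^{-6}$ decay with the precise $R^5\ln R$ prefactor. The subtlety is that $Z_6$ itself is a natural radial decaying solution of $L_0 Z = 0$ with decay only $\langle y\rangle^{-3}$; forcing the strictly faster decay $\langle y\rangle^{-6}$ inside $B_{4R}$ requires a carefully matched interior/boundary supersolution, and it is precisely this matching that generates the logarithmic loss $R^5\ln R$. Once $\tilde f$ is controlled pointwise, standard parabolic interior regularity upgrades the bound to $\langle y\rangle|\nabla f|$. Linearity of $h\mapsto(f,e_0)$ is immediate from the linearity of every step of the construction.
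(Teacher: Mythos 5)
Your proposal reconstructs, in outline, the standard inner linear theory that the paper itself relies on by citation: the paper does not prove Proposition~\ref{phi-upper-bdd} but instead refers to \cite[Proposition~7.2]{infi4d} and \cite[Proposition~7.1]{Green16JEMS} and remarks that one can ``repeat the proof'' with the present choice of $R$. Your strategy---neutralize the unstable mode by solving the scalar ODE backward in time and choosing $e_0$ accordingly, then control the residual via mode-by-mode decomposition, orthogonality-induced coercivity, and a barrier giving the $\langle y\rangle^{-6}$ interior decay with $R^5\ln R$ loss---matches the architecture of those proofs, so at the level of ``essentially the same approach'' the answer is yes.

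One detail in your explanation does not hold up: you attribute the factor $R(\tau_0)$ in the bound on $|e_0|$ to the integral $\tilde{v}(\tau)^{-1}\int_\tau^\infty e^{-\gamma_0(s-\tau)}\tilde{v}(s)\,ds$ becoming large in borderline regimes such as $\gamma=2$. Since $\tilde{v}_{5,\tilde\gamma}$ is nonincreasing, that integral is bounded by $\gamma_0^{-1}$ uniformly in $\tau_0$; the exponential damping is far stronger than any polynomial or logarithmic variation of $\tilde v$, so that term cannot produce the $R(\tau_0)$ factor. In the references this loss originates elsewhere in the construction (in the barrier/boundary matching for the stable sector, not in the unstable-mode projection), and your sketch as written does not account for it. This does not invalidate the overall plan, but the $e_0$ estimate as you derived it would read $|e_0|\lesssim\tilde{v}_{5,\tilde\gamma}(\tau_0)\|h\|_*$, and the weaker bound stated in the proposition needs a different justification than the one you give. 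You should also be aware that the heart of the matter---the precise supersolution yielding $\langle y\rangle^{-6}$ interior decay with prefactor $R^5\ln R$---is only described, not constructed, so a full proof would still require carrying out that barrier argument in detail.
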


\begin{remark}
By \eqref{tau-def}, $4 R(t(\tau))$ given here behaves like $\ln \ln \tau $ but does not satisfy the assumption for $R(\tau)$ in \cite[p.37]{infi4d} accurately. In fact, one can repeat the proof of \cite[Proposition 7.2]{infi4d} and \cite[Proposition 7.1]{Green16JEMS} to obtain Proposition \ref{phi-upper-bdd}.
\end{remark}

By Proposition \ref{phi-upper-bdd} and
the convenience for applying the Schauder fixed-point theorem for the inner problem \eqref{inner-tau-eq}, we define the norm
\begin{equation}\label{inner-norm}
	\|g\|_{\rm{in}}:=\sup_{\tau>\tau_0, y\in B_{2R(t(\tau))}}
	\Big[\tilde{v}_{5,\tilde{\gamma} }(\tau)R^{5}(t(\tau)) \ln^2 \left( R( t(\tau) ) \right) \Big]^{-1}
	\langle y\rangle^{6}
	 \big(\langle y\rangle |\nabla g(y,\tau)| + |g(y,\tau)| \big) ,
\end{equation}
and we will solve \eqref{inner-tau-eq} in the space
\begin{equation}\label{Bin-def}
B_{\rm{in}} :=
\left\{ g(x,\tau)  \ | \ g(\cdot,\tau)\in C^1\left( B_{2R(t(\tau))} \right) \mbox{ \ for \ } \tau>\tau_0,
\quad
\|g\|_{\rm{in}} \le 1 \right\} .
\end{equation}

\medskip

\section{Solving the outer problem}

\begin{proposition}\label{outer-exist}
	
Given $\phi \in B_{\rm{in}}$, $\mu_1, ~\xi \in C^1[t_0,\infty)$ satisfying \begin{equation}\label{mu-xi-rou-bound}
	|\mu_1 |\le  \mu_{0*} R^{-\frac12} ,
	\quad |\dot{\mu}_1 |\le  \mu_{0*}^{\frac12} v_{5,\tilde{\gamma}}  R^{-\frac12}  ,
	\quad
	|\xi |\le  \mu_{0*} R^{-\frac12} ,
	\quad
	 |\dot{\xi} |\le  \mu_{0*}^{\frac12}  v_{5,\tilde{\gamma} }  R^{-\frac32} ,
\end{equation}
then for $t_0$ sufficiently large, there exists a unique solution $\psi=\psi[\phi,\mu_1, \xi]$ for the outer problem \eqref{outer-problem} with $n=5$, which satisfies the following estimates:
\begin{align}
&
|\psi|\lesssim 	v_{5,\tilde{\gamma} }  R^{-1}\ln^2 R \left(\1_{|x|\le\sqrt{t}}+ t|x|^{-2}\1_{|x|>\sqrt{t}}\right) ,
\label{psi-est}
\\
&
\| \nabla \psi(\cdot,t)\|_{L^\infty(\mathbb{R}^5)}
\lesssim
v_{5,\tilde{\gamma}}  \mu_{0*}^{-1} R^{-2} \ln^2 R .
\label{nabla-psi-est}
\end{align}

\end{proposition}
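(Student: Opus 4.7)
The plan is a Banach fixed-point argument in a pair of weighted norms, reducing to Duhamel estimates for the inhomogeneous heat equation. Let $\mathcal{T}_{\rm out}[h]$ denote the Duhamel solution of $\partial_t\psi=\Delta\psi+h$, $\psi(\cdot,t_0)=0$. I introduce a source norm $\|\cdot\|_\sharp$ tailored to the expected spatial/temporal profile of $\mathcal{G}$, and a solution norm $\|\cdot\|_\flat$ that combines the pointwise weight of \eqref{psi-est} with the gradient weight of \eqref{nabla-psi-est}. The construction then rests on two ingredients: a linear estimate $\|\mathcal{T}_{\rm out}[h]\|_\flat\lesssim \|h\|_\sharp$ with constant independent of $t_0$, and a source estimate $\|\mathcal{G}[\psi,\phi,\mu,\xi]\|_\sharp\lesssim 1$ whenever $\|\psi\|_\flat\le 1$, together with its Lipschitz counterpart in $\psi$ with small constant. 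The outer problem is then the fixed-point equation $\psi=\mathcal{T}_{\rm out}[\mathcal{G}[\psi,\phi,\mu,\xi]]$ on the ball $\{\|\psi\|_\flat\le 1\}$.

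The substance of the proof is the term-by-term estimate of $\mathcal{G}$ defined in \eqref{g}, using $\phi\in B_{\rm in}$ and the hypotheses \eqref{mu1-ansatz}, \eqref{mu-xi-rou-bound}. The leading corrections $\mu^{-5/2}(\dot\mu Z_{n+1}+\dot\xi\cdot\nabla U)\eta(\tilde y)(1-\eta_R)$ are bounded using $|Z_{n+1}(y)|+|\nabla U(y)|\lesssim \langle y\rangle^{-4}$ combined with $|\dot\mu|\lesssim \mu_{0*}^{1/2}v_{5,\tilde\gamma}$ and $|\dot\xi|\lesssim \mu_{0*}^{1/2}v_{5,\tilde\gamma}R^{-3/2}$. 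The cutoff errors $\mathcal{E}_\eta$ and $\mu^{-7/2}U(y)^{7/3}(\eta(\tilde y)^{7/3}-\eta(\tilde y))$ concentrate on the self-similar annulus $|x-\xi|\sim \sqrt t$, where $U(y)\sim (\mu/|x-\xi|)^3$ makes them polynomially small. The glue terms $\Lambda_1,\Lambda_2$ are supported in $|x-\xi|\le 2\mu R$, and the inner norm \eqref{inner-norm} directly supplies the required $\langle y\rangle^{-6}$ decay and the smallness factor $\tilde v_{5,\tilde\gamma}(\tau)R^5\ln^2 R$, while the derivatives $(\nabla\eta)(y/R)$ and $(\Delta\eta)(y/R)$ appearing in \eqref{Lambda1-phi} yield the extra $R^{-1}$ and $R^{-2}$ gains. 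The source $\tfrac{7}{3}\mu^{-2}U(y)^{4/3}\eta(\tilde y)^{4/3}(\Psi_0+\psi)(1-\eta_R)$ is handled on $\{|y|\ge R\}$ by combining $U^{4/3}\lesssim \langle y\rangle^{-4}$, the pointwise bound \eqref{Psi0-rough-upp}, and the a priori hypothesis $\|\psi\|_\flat\le 1$. Finally, $\mathcal{N}[\psi,\phi,\mu,\xi]$ is the quadratic and higher Taylor remainder of $|u|^{7/3}u$ about $u_1$, dominated by sums of the form $|\psi|^{7/3}+\mu^{-4/3}U(y)^{1/3}|\psi+\mu^{-3/2}\phi\eta_R|+\cdots$, each of which is absorbed for $t_0$ large.

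The linear estimate follows from a direct heat-kernel convolution analysis: the pointwise bound \eqref{psi-est} is produced by splitting the space-time integration according to whether $|z-\xi|$ is above or below $\sqrt s$ and whether $t-s$ is above or below $t/2$, mirroring the argument of \cite[Lemma A.3]{infi4d} that yielded \eqref{Psi0-rough-upp}. For the gradient bound \eqref{nabla-psi-est}, the kernel estimate $|\nabla_x G(\cdot,t-s)|\lesssim (t-s)^{-1/2} G$ is used, and the concentration of $\mathcal{G}$ on the inner scale $|x-\xi|\lesssim \mu R\ll \sqrt t$ cuts off the $(t-s)^{-1/2}$ singularity at $t-s\sim \mu^2$, upgrading the naive $t^{-1/2}$ factor into $\mu_{0*}^{-1}$.

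The main obstacle is the uniform bookkeeping across the three regimes $\tfrac{3}{2}<\gamma<2$, $\gamma=2$ and $\gamma>2$: one must verify that each of the many source contributions fits inside the target weight $v_{5,\tilde\gamma}(t)R^{-1}\ln^2 R\,(\1_{|x|\le\sqrt t}+t|x|^{-2}\1_{|x|>\sqrt t})$, and that the logarithmic gain by $R^{-1}\ln^2 R$ is genuinely recovered from the inner-outer matching in every regime. Once this is carried out, contraction is immediate: the only $\psi$-dependent pieces are the linear term $\mu^{-2}U^{4/3}\eta^{4/3}(1-\eta_R)\psi$ with small coefficient $\OO(R^{-4})$ outside the inner core, and the quadratic remainder $\mathcal{N}$, whose Lipschitz constant in $\psi$ is $\OO(t_0^{-\sigma})$ for some $\sigma>0$. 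The Banach contraction theorem then produces the unique $\psi=\psi[\phi,\mu_1,\xi]$ satisfying \eqref{psi-est}--\eqref{nabla-psi-est}.
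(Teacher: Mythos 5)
Your proposal follows essentially the same route as the paper: a fixed-point argument for $\psi=\mathcal{T}^{\rm out}_5[\mathcal{G}[\psi,\phi,\mu,\xi]]$ in a weighted ball built around the target profile $v_{5,\tilde\gamma}R^{-1}\ln^2 R\,(\1_{|x|\le\sqrt t}+t|x|^{-2}\1_{|x|>\sqrt t})$, with term-by-term convolution estimates on the sources in $\mathcal{G}$ and contraction coming from the small coefficient of the linear $\psi$-term and the superlinear remainder $\mathcal{N}$. The only organizational difference is that you fold the gradient bound into the fixed-point norm $\|\cdot\|_\flat$, whereas the paper runs the contraction in the pointwise norm alone and recovers \eqref{nabla-psi-est} afterward by a scaling argument on the equation using $|\mathcal{G}|\lesssim v_{5,\tilde\gamma}\mu_{0*}^{-2}R^{-3}\ln^2 R$; both implementations are correct.
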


\begin{proof}
It suffices to find a fixed point for the following mapping
\begin{equation*}
\psi = \mathcal{T}^{\rm{out}}_5\left[ \mathcal{G}\left[\psi,\phi,\mu,\xi\right] \right],
\end{equation*}
where $ \mathcal{G}\left[\psi,\phi,\mu,\xi\right]$ is given in \eqref{g}, and
\begin{equation*}
\mathcal{T}^{\rm{out}}_5\left[f\right]
:=
\int_{t_0}^t \int_{\R^5}
\left[ 4\pi(t-s)\right]^{-\frac{5}{2}}
e^{ -\frac{|x-z|^2}{4(t-s)}  } f(z,s) dz ds .
\end{equation*}
In this proof, we always assume $t_0$ is sufficiently large and $\int_{t_2}^{t_1} \cdots ds =0$ if $t_1\le t_2$. Obviously, \eqref{mu-xi-rou-bound} implies the ansatz \eqref{mu1-ansatz} as well as \eqref{self-similar-ansatz}. Combining these with \eqref{mu0-est}, we see that there exists a constant $C_{\mu}>9$ sufficiently large such that
\begin{equation}
9 C_{\mu}^{-1} \mu_{0*} <\mu<C_{\mu} \mu_{0*}/9 .
\end{equation}

In what follows, \cite[Lemma A.1, Lemma A.2]{infi4d}
 will be used repetitively to estimate $\mathcal{T}^{\rm{out}}_5[\cdot]$.

Recall $\Lambda_1\left[\phi,\mu,\xi\right]$ given in \eqref{Lambda1-phi}. Using \eqref{self-similar-ansatz}, \eqref{mu-xi-rou-bound}, and  $\gamma>\frac{3}{2}$, we have
\begin{equation*}
	\left| \frac{\dot{\xi}}{\mu R} \right|
	+
	\left| \frac{\pp_t(\mu R)}{\mu R}\right|
	=
	\left| \frac{\dot{\xi}}{\mu R} \right|
	+
	\left| \frac{\dot{\mu}}{\mu} + \frac{\dot{R}}{R}\right| \lesssim \mu^{-2} R^{-2} .
\end{equation*}
For $\phi \in B_{\rm{in}}$, by \eqref{til-v-def},
\begin{equation*}
\langle y\rangle |\nabla_y \phi| + |\phi|
\lesssim   \mu_{0*}^{\frac{3}{2}} v_{5,\tilde{\gamma}} R^5 \ln^2 R
\langle y \rangle^{-6}  .
\end{equation*}
Thus,
\begin{equation}\label{Lambda1-upp}
\left| 	\Lambda_1\left[\phi,\mu,\xi\right] \right|
\lesssim
\mu_{0*}^{-2} R^{-3} \ln^2 R
v_{5,\tilde{\gamma} }  \1_{R\le |y| \le 2R}
\le
\mu_{0*}^{-2} R^{-3} \ln^2 R
v_{5,\tilde{\gamma}} \1_{ |x| \le C_{\mu} \mu_{0*} R}
 .
\end{equation}
Then
\begin{equation*}
\begin{aligned}
	&
\mathcal{T}^{\rm{out}}_5 \left[\mu_{0*}^{-2} R^{-3} \ln^2 R
v_{5,\tilde{\gamma}}  \1_{ |x| \le C_{\mu}\mu_{0*} R} \right]
\lesssim
t^{-\frac{5}{2}} e^{-\frac{|x|^2}{16 t}}
\int_{t_0}^{\frac{t}{2}}
\mu_{0*}^3(s)
\left( R^2 \ln^2 R\right)(s) v_{5,\tilde{\gamma} }(s) ds
\\
& +
\mu_{0*}^{-2} R^{-3} \ln^2 R
v_{5,\tilde{\gamma} }
\left[
\left( \mu_{0*} R \right)^2
\1_{|x| \le \mu_{0*} R }
+
|x|^{-3}
e^{-\frac{|x|^2}{16 t }}
\left( \mu_{0*} R\right)^5
\1_{|x| > \mu_{0*} R }
\right]
\\
\lesssim \ &
w_{o}(x,t):=	v_{5,\tilde{\gamma} }  R^{-1}\ln^2 R  \left(\1_{|x|\le \sqrt{t}}+t|x|^{-2}\1_{|x|>\sqrt{t}}\right) ,
\end{aligned}
\end{equation*}
where we have used the properties $v_{5,\tilde{\gamma} }  R^{-1}\ln^2 R  \gtrsim t^{-\frac{5}{2} +c}$ with a small constant $c>0$ and $\gamma>\frac{3}{2}$ to get the last inequality.  Then
\begin{equation*}
\left|	\mathcal{T}^{\text{out}}_5
	\left[  	\Lambda_1\left[\phi,\mu,\xi\right] \right] \right|
	\le C_o w_o(x,t)/2
\end{equation*}
with a sufficiently large constant $C_o \ge 2$.
For this reason, we define the norm
\begin{equation*}
	\|f\|_{\rm{out}}:=\sup_{t\ge t_0,~x\in \mathbb{R}^5 }\left(w_o(x,t)\right)^{-1}|f(x,t)|,
\end{equation*}
and the outer problem \eqref{outer-problem} will be solved in the space
\begin{equation}\label{Bout-def}
	B_{\rm{out}}:=\{f \ | \  \|f\|_{\rm{out}}\le C_o\}.
\end{equation}

Assume $\epsilon_1>0$ is a sufficiently small constant, which can vary from line to line.
For $\Lambda_2\left[\phi,\mu,\xi\right]$ given in \eqref{Lambda2-phi}, we have
\begin{equation*}
	\left| \Lambda_2\left[\phi,\mu,\xi\right] \right|
	\lesssim
	\mu_{0*}^{-\frac{1}{2}} v_{5,\tilde{\gamma}}^2
	R^5 \ln^2 R
	\langle y\rangle^{-6}
	\1_{|x|\le C_{\mu}\mu_{0*} R }
	\lesssim
	t^{-\epsilon_1}
	\mu_{0*}^{-2} R^{-3} \ln^2 R
	v_{5,\tilde{\gamma}}  \1_{ |x| \le C_{\mu}\mu_{0*} R} ,
\end{equation*}
where we have used $\gamma>\frac{3}{2}$ and the last term has been handled in \eqref{Lambda1-upp}.

By \eqref{self-similar-ansatz}, \eqref{mu-xi-rou-bound}, and $\gamma>\frac{3}{2}$, one has
\begin{equation*}
	\begin{aligned}	
		&
		\left|	\left(
		\mu^{-\frac{5}{2}} \dot{\mu} Z_{6}(y)
		+
		\mu^{-\frac{5}{2}} \dot{\xi} \cdot
		\left( \nabla U \right)\left( y \right)
		\right) \eta\left(\tilde{y}  \right) \left( 1-\eta_R \right)
		\right|
		+
		\left| \mathcal{E}_{\eta} \right|
		+
		\left|
		\mu^{-\frac{7}{2}} U(y)^{\frac{7}{3}}
		\left( \eta(\tilde{y})^{\frac{7}{3}}
		-
		\eta\left( \tilde{y}\right)
		\right)
		\right|
		\\
		&
		+
		\bigg|
		\frac{7}{3} \mu^{-2}
		U(y)^{\frac{4}{3}} \eta(\tilde{y})^{\frac{4}{3}}
		\Psi_0
		\left( 1-\eta_R \right)
		\bigg|
		\\
		\lesssim \ &
		\mu_{0*}^{-2} v_{5,\tilde{\gamma}}
		\langle y \rangle^{-3}
		\1_{ \mu R \le |x-\xi| \le 2\sqrt{t} }
		+
		\mu_{0*}^{-\frac{3}{2}} t^{-1}
		\langle y \rangle^{-3}
		\1_{ \sqrt{t} \le |x-\xi| \le 2\sqrt{t} }
		\\
		\lesssim \ &
		v_{5,\tilde{\gamma}}
		\mu_{0*}
		|x|^{-3}
		\1_{ C_{\mu}^{-1} \mu_{0*} R \le |x | \le 4\sqrt{t} }
		+
		\mu_{0*}^{\frac{3}{2}} t^{-\frac{5}{2}}
		\1_{ \sqrt{t}/2 \le |x | \le 4\sqrt{t} }  ,
	\end{aligned}
\end{equation*}   
and their convolutions can be estimated as
\begin{equation*}
	\begin{aligned}
		&
		\mathcal{T}^{\text{out}}_5 \left[
		v_{5,\tilde{\gamma}}
		\mu_{0*}
		|x|^{-3}
		\1_{ C_{\mu}^{-1} \mu_{0*} R \le |x | \le 4\sqrt{t} }  \right]
		\lesssim
		t^{-\frac{5}{2}} e^{-\frac{|x|^2}{16 t}}
		\int_{t_0}^{\frac{t}{2}}
		v_{5,\tilde{\gamma}}(s)
		\mu_{0*}(s)  s ds
		\\
		& \quad +
		v_{5,\tilde{\gamma}}
		\mu_{0*}
		\left(
		\left( \mu_{0*} R\right)^{-1}
		\1_{|x|\le \mu_{0*} R}
		+
		|x|^{-1}
		\1_{ \mu_{0*} R < |x|\le \sqrt{t}}
		+
		t|x|^{-3} e^{-\frac{|x|^2}{16 t}}
		\1_{|x|>\sqrt{t}}
		\right)
		\lesssim
		\left( \ln R \right)^{-2} \omega_{o}(x,t) ;
\\
&
\mathcal{T}^{\rm{out}}_5 \left[
\mu_{0*}^{\frac{3}{2}} t^{-\frac{5}{2}}
\1_{ \sqrt{t}/2 \le |x | \le 4\sqrt{t} }  \right]
\lesssim
t^{-\frac{5}{2}} e^{-\frac{|x|^2}{16 t}}
\int_{t_0}^{\frac{t}{2}}
\mu_{0*}^{\frac{3}{2}}(s) ds
+
\mu_{0*}^{\frac{3}{2}} t^{-\frac{3}{2}}
\left(  \1_{|x| \le \sqrt{t}}
+
t^{\frac{3}{2}} |x|^{-3} e^{-\frac{|x|^2}{16 t}}
\1_{|x| >\sqrt{t}}
\right)
\lesssim
t^{-\epsilon_1} \omega_{o}(x,t) ,
	\end{aligned}
\end{equation*}
where in the last step, we have used the property $\tilde{\gamma}<3$ in \eqref{til-gamma} and $\gamma>\frac{3}{2}$.

For any $\psi_1,\psi_2\in B_{\rm{out}}$, we have
\begin{equation*}
\begin{aligned}
	&
	\left|
\mu^{-2}
U(y)^{\frac{4}{3}} \eta(\tilde{y})^{\frac{4}{3}}
\left( \psi_1 - \psi_2   \right)
\left( 1-\eta_R \right)
\right|
\lesssim
\mu_{0*}^{-2} \langle y \rangle^{-4}
\| \psi_1-\psi_2\|_{\rm{out}} w_o(x,t) \1_{\mu R \le |x-\xi| \le 2\sqrt{t} }
\\
\lesssim \ &
v_{5,\tilde{\gamma} }  R^{-1}\ln^2 R \mu_{0*}^{2} |x|^{-4}
\1_{C_{\mu}^{-1} \mu_{0*} R \le |x| \le 4\sqrt{t} }
\| \psi_1-\psi_2\|_{\rm{out}} .
\end{aligned}
\end{equation*}
Then
\begin{equation*}
\begin{aligned}
&	
\mathcal{T}^{\rm{out}}_5 \left[
v_{5,\tilde{\gamma} }  R^{-1}\ln^2 R \mu_{0*}^{2} |x|^{-4}
\1_{C_{\mu}^{-1} \mu_{0*} R \le |x| \le 4\sqrt{t} } \right]
\lesssim
t^{-\frac{5}{2}} e^{-\frac{|x|^2}{16 t}} \int_{t_0}^{\frac{t}{2}}
(v_{5,\tilde{\gamma} }  R^{-1}\ln^2 R)(s) \mu_{0*}^2(s) s^{\frac{1}{2}} ds
\\
&
+
v_{5,\tilde{\gamma} }  R^{-1}\ln^2 R \mu_{0*}^2
\left(
\mu_{0*}^{-2} R^{-2} \1_{|x|\le\sqrt{t} }
+
|x|^{-3} e^{-\frac{|x|^2}{16 t}} t^{\frac{1}{2}} \1_{|x|>\sqrt{t}}
\right)
\lesssim 	
R^{-2} w_{o}(x,t) ,
\end{aligned}
\end{equation*}
where we have used $\gamma>\frac{3}{2}$ in the last step.

For $\mathcal{N}\left[\psi,\phi,\mu,\xi\right]$ defined in \eqref{N-def}, given any $\psi \in B_{\rm{out}}$, we estimate
\begin{equation*}
\begin{aligned}
&
\left|  \mathcal{N}\left[\psi,\phi,\mu,\xi\right] \right|
\lesssim
\left(
\left| \mu^{-\frac{3}{2}} U(y) \eta(\tilde{y})\right|^{\frac{1}{3}}
+
\left|
\Psi_0 + \psi + \mu^{-\frac{3}{2}} \phi(y,t) \eta_R
\right|^{\frac{1}{3}}
\right)
\left|
\Psi_0 + \psi + \mu^{-\frac{3}{2}} \phi(y,t) \eta_R
\right|^2
\\
\lesssim \ &
\mu^{-\frac{1}{2}} U(y)^{\frac{1}{3}} \eta(\tilde{y})^{\frac{1}{3}}
\left(
\Psi_0^2 + \psi^2 + \left|\mu^{-\frac{3}{2}} \phi(y,t) \eta_R
\right|^{2}  \right)
+
|\Psi_0|^{\frac{7}{3}}
+
|\psi|^{\frac{7}{3}}
+
\left|\mu^{-\frac{3}{2}} \phi(y,t) \eta_R
\right|^{\frac{7}{3}}
\\
\lesssim \ &
\mu_{0*}^{-\frac{1}{2}} \langle y \rangle^{-1} \1_{|x-\xi|\le 2 t^{\frac{1}{2}}}
\bigg[
t^{-\tilde{\gamma}}	
\1_{ |x|\le t^{\frac 12} }
+
| x|^{- 2\tilde{\gamma} }
\1_{ |x| > t^{\frac 12} }
 +
C_{o}^2
\left(
v_{5,\tilde{\gamma}} R^{-1} \ln^2 R
\right)^2
\left( \1_{|x|\le t^{\frac{1}{2}}}
+
\left(t|x|^{-2} \right)^2 \1_{|x|> t^{\frac{1}{2}} }
\right)
\\
&
  +
\left(
v_{5,\tilde{\gamma}} R^5 \ln^2 R
\langle y\rangle^{-6}
\right)^2
\1_{|x-\xi|\le 2\mu R}
    \bigg]
\\
&
+
t^{-\frac{7}{3} \frac{\tilde{\gamma}}{2}}	
\1_{ |x|\le t^{\frac 12} }
+
| x|^{- \frac{7}{3} \tilde{\gamma} }
\1_{ |x| > t^{\frac 12} }
+
C_{o}^{\frac{7}{3}}
\left(
v_{5,\tilde{\gamma}} R^{-1} \ln^2 R
\right)^{\frac{7}{3}}
\left( \1_{|x|\le t^{\frac{1}{2}} }
+
\left(t|x|^{-2} \right)^ {\frac{7}{3}} \1_{|x|> t^{\frac{1}{2}} }
\right)
\\
&
+
\left(
v_{5,\tilde{\gamma}} R^5 \ln^2 R
\langle y\rangle^{-6}
\right)^{\frac{7}{3}}
\1_{|x-\xi|\le 2\mu R}
\\
\lesssim \ &
\mu_{0*}^{\frac{1}{2}} \left(|x|+\mu_{0*}\right)^{-1}
\1_{|x|\le 4 t^{\frac{1}{2}}}
\left[
C_{o}^2
\left(
v_{5,\tilde{\gamma}} R^5 \ln^2 R
\right)^2
\1_{|x|\le t^{\frac{1}{2}}}
+
| x|^{- 2\tilde{\gamma} }
\1_{ |x| > t^{\frac 12} }
+
C_{o}^2
\left(
v_{5,\tilde{\gamma}} R^{-1} \ln^2 R
\right)^2
\left(t|x|^{-2} \right)^2 \1_{|x|> t^{\frac{1}{2}} }
\right]
\\
&
+
C_{o}^{\frac{7}{3}}
\left(
v_{5,\tilde{\gamma}} R^5 \ln^2 R
\right)^{\frac{7}{3}}
\1_{|x|\le t^{\frac{1}{2}} }
+
| x|^{- \frac{7}{3} \tilde{\gamma} }
\1_{ |x| > t^{\frac 12} }
+
C_{o}^{\frac{7}{3}}
\left(
v_{5,\tilde{\gamma}} R^{-1} \ln^2 R
\right)^{\frac{7}{3}}
\left(t|x|^{-2} \right)^ {\frac{7}{3}} \1_{|x|> t^{\frac{1}{2}} }
\\
\lesssim \ &
C_{o}^2
\left(
v_{5,\tilde{\gamma}} R^5 \ln^2 R
\right)^2
\mu_{0*}^{\frac{1}{2}} \left(|x|+\mu_{0*}\right)^{-1}
\1_{|x|\le t^{\frac{1}{2}}}
+
C_{o}^2
\mu_{0*}^{\frac{1}{2}}
t^{-\frac{1}{2} -\tilde{\gamma}}
\1_{ t^{\frac 12} < |x| \le 4 t^{\frac 12} }
\\
&
+
C_{o}^{\frac{7}{3}}
\left(
v_{5,\tilde{\gamma}} R^5 \ln^2 R
\right)^{\frac{7}{3}}
\1_{|x|\le t^{\frac{1}{2}} }
+
| x|^{- \frac{7}{3} \tilde{\gamma} }
\1_{ |x| > t^{\frac 12} }
+
C_{o}^{\frac{7}{3}}
\left(
v_{5,\tilde{\gamma}} R^{-1} \ln^2 R
\right)^{\frac{7}{3}}
\left(t|x|^{-2} \right)^ {\frac{7}{3}} \1_{|x|> t^{\frac{1}{2}} }  .
\end{aligned}
\end{equation*}
Here, by $\gamma>\frac{3}{2}$, we then estimate their convolutions
\begin{equation*}
\begin{aligned}
	&
\mathcal{T}^{\rm{out}}_5
\left[ \left(
v_{5,\tilde{\gamma}} R^5 \ln^2 R
\right)^2
\mu_{0*}^{\frac{1}{2}} \left(|x|+\mu_{0*}\right)^{-1}
\1_{|x|\le t^{\frac{1}{2}}}
\right]
\lesssim
t^{-\frac{5}{2}} e^{-\frac{|x|^2}{16 t}}
\int_{t_0}^{\frac{t}{2}}
\left( v_{5,\tilde{\gamma}} R^5 \ln^2 R\right)^2(s) \mu_{0*}^{\frac{1}{2}}(s)
s^2 ds
\\
&
\quad
+
\left( v_{5,\tilde{\gamma}} R^5 \ln^2 R\right)^2  \mu_{0*}^{\frac{1}{2}}
\left(
t^{\frac{1}{2}} \1_{|x|\le t^{\frac{1}{2}}}
+
t^2  |x|^{-3} e^{-\frac{|x|^2}{16 t}}
\1_{|x|>t^{\frac{1}{2}}}
\right)
\lesssim t^{-\epsilon_1} w_o(x,t) ;
\\
&
\mathcal{T}^{\rm{out}}_5
\left[ \left(
v_{5,\tilde{\gamma}} R^5 \ln^2 R
\right)^{\frac{7}{3}}
\1_{|x|\le t^{\frac{1}{2}} } \right]
\lesssim
t^{-\frac{5}{2}} e^{-\frac{|x|^2}{16 t}}
\int_{t_0}^{\frac{t}{2}}
\left( v_{5,\tilde{\gamma}} R^5 \ln^2 R\right)^{\frac{7}{3}}(s) s^{\frac{5}{2}} ds
\\
& \quad  +
\left( v_{5,\tilde{\gamma} } R^5 \ln^2 R \right)^{\frac{7}{3}}
\left(
t \1_{|x|\le t^{\frac{1}{2} } }
	+
t^{\frac{5}{2}} |x|^{-3} e^{-\frac{|x|^2}{16 t}}
\1_{|x| > t^{\frac{1}{2} } }
\right)
\lesssim  t^{-\epsilon_1} w_o(x,t) ;
\end{aligned}
\end{equation*}
\begin{equation*}
\mu_{0*}^{\frac{1}{2}}
t^{-\frac{1}{2} -\tilde{\gamma}}
\1_{ t^{\frac 12} < |x| \le 4 t^{\frac 12} }
\lesssim
| x|^{- \frac{7}{3} \tilde{\gamma} }
\1_{ |x| > t^{\frac 12} }  ,
\end{equation*}
where we used the property $\tilde{\gamma}<3$;
\begin{equation*}
\begin{aligned}
	&
\mathcal{T}^{\rm{out}}_5
\left[ | x|^{- \frac{7}{3} \tilde{\gamma} }
\1_{ |x| > t^{\frac 12} } \right]
\lesssim
\bigg[
t^{-\frac{5}{2}}
\int_{t_0}^{\frac{t}{2}}
\begin{cases}
0, & \mbox{ \ if \ } \frac{7}{3} \tilde{\gamma} <5
\\
\langle \ln \left( t s^{-1}\right) \rangle
, & \mbox{ \ if \ } \frac{7}{3} \tilde{\gamma} =5
\\
s^{\frac{5}{2} -\frac{7}{6} \tilde{\gamma}}
, & \mbox{ \ if \ } \frac{7}{3} \tilde{\gamma} >5
\end{cases}
~ds
\quad
+
t^{1-\frac{7}{6} \tilde{\gamma}}
\bigg] \1_{|x|\le t^{\frac{1}{2}}}
\\
&
+
\bigg[
t |x|^{-\frac{7}{3} \tilde{\gamma}}
+
t^{-\frac{5}{2}} e^{-\frac{|x|^2}{16 t}}
\int_{t_0}^{\frac{t}{2}}
\begin{cases}
0, & \mbox{ \ if \ } \frac{7}{3} \tilde{\gamma} <5
\\
\langle \ln ( |x| s^{-\frac{1}{2}}) \rangle
, & \mbox{ \ if \ } \frac{7}{3} \tilde{\gamma} =5
\\
s^{\frac{5}{2} -\frac{7}{6} \tilde{\gamma}}
, & \mbox{ \ if \ } \frac{7}{3} \tilde{\gamma} > 5
\end{cases}
~ds
\bigg] \1_{|x|> t^{\frac{1}{2}}}
\lesssim  t^{-\epsilon_1} w_o(x,t) ;
\end{aligned}
\end{equation*}
\begin{equation*}
\begin{aligned}
&
\mathcal{T}^{\rm{out}}_5\left[
\left(
v_{5,\tilde{\gamma}} R^{-1} \ln^2 R
\right)^{\frac{7}{3}}
\left(t|x|^{-2} \right)^ {\frac{7}{3}} \1_{|x|> t^{\frac{1}{2}} }  \right]
\lesssim
\left[
t^{-\frac{7}{3}} \int_{t_0}^{\frac{t}{2}}
\left( v_{5,\tilde{\gamma}} R^{-1} \ln^2 R \right)^{\frac{7}{3}}(s) s^{\frac{7}{3}} ds
+
t \left( v_{5,\tilde{\gamma}} R^{-1} \ln^2 R \right)^{\frac{7}{3}}
\right]
\1_{|x|\le t^{\frac{1}{2}}}
\\
&
+
|x|^{-\frac{14}{3}}
\left[
t^{\frac{10}{3}}
\left( v_{5,\tilde{\gamma}} R^{-1} \ln^2 R \right)^{\frac{7}{3}}
+
\int_{t_0}^{\frac{t}{2}}
\left( v_{5,\tilde{\gamma}} R^{-1} \ln^2 R \right)^{\frac{7}{3}}(s)
s^{\frac{7}{3}} ds
\right]
\1_{|x| > t^{\frac{1}{2}}}
\lesssim  t^{-\epsilon_1} w_o(x,t) .
\end{aligned}
\end{equation*}

For any $\psi_1,\psi_2\in B_{\rm{out}}$, one has
\begin{equation*}
\begin{aligned}
	&
	\left|
\mathcal{N}\left[\psi_1,\phi,\mu,\xi\right] -
\mathcal{N}\left[\psi_2,\phi,\mu,\xi\right] \right|
\\
= \ &
\left|
\frac{7}{3}
\left( \psi_1-\psi_2\right)
\left[
\left|
\mu^{-\frac{3}{2}} U(y) \eta(\tilde{y}) +\Psi_0 +
\theta \psi_1 + \left( 1-\theta \right)
\psi_2
+
\mu^{-\frac{3}{2}} \phi(y,t) \eta_R
\right|^{\frac{4}{3}}
-
\left| \mu^{-\frac{3}{2}} U(y) \eta(\tilde{y}) \right|^{\frac{4}{3}}
\right]
\right|
\\
\lesssim \ &
\| \psi_1-\psi_2\|_{\rm{out}}
w_{o}(x,t)
\bigg[
\mu^{-\frac{1}{2}} U(y)^{\frac{1}{3}} \eta(\tilde{y})^{\frac{1}{3}}
\left(
|\Psi_0| + C_o w_o(x,t) +
\left|\mu^{-\frac{3}{2}}  \phi(y,t) \eta_R \right|
\right)
\\
&
+
|\Psi_0|^{\frac{4}{3}} + C_o^{\frac{4}{3}} w_o(x,t)^{\frac{4}{3}} +
\left|\mu^{-\frac{3}{2}} \phi(y,t) \eta_R \right|   ^{\frac{4}{3}}
\bigg]
\\
\lesssim \ &
\| \psi_1-\psi_2\|_{\rm{out}}
\bigg[
\mu^{-\frac{1}{2}} U(y)^{\frac{1}{3}} \eta(\tilde{y})^{\frac{1}{3}}
\left(
|\Psi_0|^2 + C_o w_o(x,t)^2 +
\left|\mu^{-\frac{3}{2}}  \phi(y,t) \eta_R \right|^2
\right)
\\
&
+
|\Psi_0|^{\frac{7}{3}} + C_o^{\frac{4}{3}} w_o(x,t)^{\frac{7}{3}} +
\left|\mu^{-\frac{3}{2}} \phi(y,t) \eta_R \right|   ^{\frac{7}{3}}
\bigg] ,
\end{aligned}
\end{equation*}
which can be handled by the same way for estimating $\left|  \mathcal{N}\left[\psi,\phi,\mu,\xi\right] \right|$.

In sum, for $t_0$ sufficiently large,
$\mathcal{T}^{\rm{out}}_5\left[ \mathcal{G}\left[\psi,\phi,\mu,\xi\right] \right] \in B_{\rm{out}}$ and is a contraction mapping about $\psi$, which implies that there exists a unique solution $\psi\in B_{\rm{out}}$. Moreover, by $\gamma>\frac{3}{2}$, we have
\begin{equation*}
\left|  \mathcal{G}\left[\psi,\phi,\mu,\xi\right] \right|
\lesssim
v_{5,\tilde{\gamma}} \mu_{0*}^{-2} R^{-3} \ln^2 R .
\end{equation*}
By the scaling argument, we get \eqref{nabla-psi-est}.

\end{proof}

 \medskip

\section{Solving orthogonal equations about $\mu_1, \xi$}

For the utilization of Proposition \ref{phi-upper-bdd} for the inner problem, we need to choose suitable $\mu_1$, $\xi$ such that the orthogonal conditions
\begin{equation}\label{orth-eq}
	\int_{B_{4R }}\mathcal{H}[\psi, \mu, \xi](y,t) Z_{i}(y) dy=0 ,
	\quad
	\mu=\mu_0+\mu_1,
	\quad i=1, \dots, n+1 , \quad  n=5
\end{equation}
are satisfied, where $\psi=\psi[\phi,\mu_1, \xi]$ is solved by Proposition \ref{outer-exist},
and $\mathcal{H}\left[\psi,\mu,\xi\right]$ is given in \eqref{H-def}.
\begin{proposition}\label{mu1-xi-prop}
	Given $0<D_0\le D_1 <2D_0$,
for $t_0$ sufficiently large,	then there exists a solution $(\mu_1,\xi)= (\mu_1[\phi],\xi[\phi])$ for \eqref{orth-eq} with $n=5$ satisfying
\begin{equation}\label{mu-xi-final}
	\left|\mu_1  \right|
	\lesssim
	\mu_{0*} R^{-\frac{2}{3}},
\quad
|\dot{\mu}_1|
\lesssim
 \mu_{0*}^{\frac{1}{2}} v_{5,\tilde{\gamma}} R^{-\frac{2}{3}},
	\quad
	\left| \xi \right| \lesssim
	\mu_{0*} R^{-\frac{7}{4}} ,
\quad
|\dot{\xi}|
\lesssim
\mu_{0*}^{\frac{1}{2}}  v_{5,\tilde{\gamma}} R^{-\frac{7}{4}}
.
\end{equation}

\end{proposition}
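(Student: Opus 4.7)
The six orthogonality conditions in \eqref{orth-eq} decouple naturally by symmetry: the five translation modes $Z_1,\ldots,Z_5$ (each odd in one coordinate) are orthogonal to the radial dilation mode $Z_6$, so the former will determine $\xi$ and the latter will determine $\mu_1$. My plan is to reduce \eqref{orth-eq} to a system of quasi-linear ODEs for $(\dot{\xi},\dot{\mu}_1)$ by pairing with the $Z_i$'s and extracting the leading terms, then to close the argument by a fixed-point scheme in the weighted ball described by \eqref{mu-xi-final}.

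For the translation equations $i=1,\ldots,5$, I would Taylor-expand $\Psi_0(\mu y+\xi,t)$ around $y=0$. The constant $\Psi_0(\xi,t)$ is annihilated by the odd $Z_i$, while the linear term $\mu y\cdot\nabla\Psi_0(\xi,t)$ picks up $\partial_{x_i}\Psi_0(\xi,t)$ multiplied by a nonzero radial integral of $U^{4/3}$. Using $\int_{B_{4R}} Z_i Z_j = c\,\delta_{ij}\int_{B_{4R}}|\nabla U|^2/5$ and $\int_{B_{4R}} Z_6 Z_i =0$, the $i$-th orthogonality reduces to
\begin{equation*}
c_1\mu\dot{\xi}_i+c_2\,\mu^{\frac{n-2}{2}+1}\partial_{x_i}\Psi_0(\xi,t)+\mathcal{R}^{\rm tr}_i[\psi,\phi,\mu_1,\xi]=0,
\end{equation*}
where $\mathcal{R}^{\rm tr}_i$ collects the quadratic Taylor tail (bounded via $\|\nabla^2\Psi_0\|_{L^\infty}\lesssim t^{-1}v_{5,\gamma}$), the contribution of $\psi$ estimated by Proposition \ref{outer-exist}, and the terms containing $\mu_1$. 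Exploiting $|\nabla\Psi_0|\lesssim t^{-1/2}v_{5,\gamma}$ from \eqref{nab-Psi0-upp} together with $\gamma>\tfrac{3}{2}$ yields $|\dot{\xi}_i|\lesssim \mu_{0*}^{1/2}v_{5,\tilde{\gamma}}R^{-7/4}$ with a comfortable margin in powers of $t$. I would then prescribe $\xi(t)=-\int_t^\infty\dot{\xi}(s)\,ds$, so that $\xi(\infty)=0$ and the bound on $|\xi|$ in \eqref{mu-xi-final} follows from integration and $|\dot{\xi}|$ decaying faster than $t^{-1}$.

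For the dilation equation $i=6$, I would subtract the defining identity \eqref{mu0-eq-original} for $\mu_0$ to isolate an evolution equation for $\mu_1$. Expanding $\mu\dot{\mu}=\mu_0\dot{\mu}_0+\mu_0\dot{\mu}_1+\mu_1\dot{\mu}_0+\mu_1\dot{\mu}_1$ and $\mu^{(n-2)/2}\Psi_0(\mu y+\xi,t)=\mu_0^{(n-2)/2}\Psi_0(0,t)+(\mbox{linear in }\mu_1,\xi)+(\mbox{higher order})$ produces
\begin{equation*}
\dot{\mu}_1=a(t)\mu_1+\mathcal{R}^{\rm dil}[\psi,\phi,\mu_1,\xi],
\end{equation*}
with $a(t)=O(\dot{\mu}_0/\mu_0)$ and the remainder $\mathcal{R}^{\rm dil}$ containing the $O(\mu^2|y|^2\|\nabla^2\Psi_0\|)$ Taylor remainder, the drift $\xi\cdot\nabla\Psi_0(0,t)$ already controlled in the previous step, the coupling with $\psi$ from Proposition \ref{outer-exist}, and the quadratic self-interaction $\mu_1\dot{\mu}_1$. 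Solving by the integrating-factor method with an initial condition chosen at $t=t_0$ gives $|\mu_1|\lesssim \mu_{0*}R^{-2/3}$; the fractional exponent $R^{-2/3}$ arises from the $R^{\max\{-2,4-n\}}=R^{-1}$ correction in $A(R)$ from \eqref{AR-def}, balanced against the $\ln^2 R$ losses carried by the $\psi$ bound in Proposition \ref{outer-exist}.

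Finally, I would set up a solution map $\mathcal{F}:(\mu_1,\xi)\mapsto(\tilde{\mu}_1,\tilde{\xi})$ by first applying Proposition \ref{outer-exist} to obtain $\psi=\psi[\phi,\mu_1,\xi]$ and then reading off $(\tilde{\mu}_1,\tilde{\xi})$ from the explicit integrations above. The estimates just sketched show that $\mathcal{F}$ maps the ball defined by \eqref{mu-xi-final} into itself provided $t_0$ is large enough. The main obstacle I expect is verifying that $\mathcal{F}$ is a contraction, since $\psi$ depends nonlinearly on $(\mu_1,\xi)$ through all four terms $\mathcal{E}_\eta,\Lambda_1,\Lambda_2,\mathcal{N}$ in \eqref{g}; a Lipschitz estimate for $\psi$ in $(\mu_1,\xi)$ has to be produced by revisiting the contraction step inside the proof of Proposition \ref{outer-exist}, then combined with pointwise Lipschitz bounds for $\Psi_0$ and its derivatives at $\xi$. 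Once this Lipschitz dependence is confirmed with a small factor (a power of $R^{-1}$ or $t^{-\epsilon_1}$), it propagates through both reduced ODEs and the Banach fixed-point theorem delivers the desired $(\mu_1,\xi)$.
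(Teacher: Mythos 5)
Your overall architecture — split the six orthogonality conditions by parity into a dilation equation for $\mu_1$ and translation equations for $\xi$, subtract the $\mu_0$-equation, reduce to quasi-linear ODEs, and close by a fixed point — is the same skeleton the paper uses. However, there are two genuine gaps.

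First, your choice of integration constants is wrong. You prescribe $\xi(t) = -\int_t^\infty \dot{\xi}(s)\,ds$, claiming $|\dot{\xi}|$ decays faster than $t^{-1}$. That claim is false for $\gamma \le 2$: from \eqref{mu-xi-norm}, for $\gamma < 2$ one has $\mu_{0*}^{1/2} v_{5,\tilde{\gamma}} \sim t^{1-\gamma}$ with $1-\gamma > -1$, and for $\gamma = 2$ it is $\sim t^{-1}\ln t$; in both cases $\int^\infty |\dot{\xi}(s)|\,ds$ diverges and your $\xi$ is not well-defined. Conversely, you fix $\mu_1(t_0) = 0$, but for $\gamma > 2$ the target bound $|\mu_1| \lesssim \mu_{0*} R^{-2/3} \sim (\ln\ln t)^{-2/3}$ decays to $0$ as $t \to \infty$, whereas $\int_{t_0}^t |\dot{\mu}_1|\,ds$ converges to a fixed positive constant, so the bound fails for large $t$. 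The paper resolves this by choosing the basepoint $\tilde{t}_0$ \emph{simultaneously for both $\mu_1$ and $\xi$} in a $\gamma$-dependent way, $\tilde{t}_0 = t_0$ for $\gamma \le 2$ and $\tilde{t}_0 = \infty$ for $\gamma > 2$ (see \eqref{mu1-xi-sys}). Your two choices are each correct in only one of these regimes, and they are incompatible: for every $\gamma$, one of your two constructions breaks.

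Second, you propose to close with Banach's fixed-point theorem and acknowledge that this requires a Lipschitz estimate for $\psi[\phi,\mu_1,\xi]$ in the parameters $(\mu_1,\xi)$, which you defer. Proposition \ref{outer-exist} as stated gives only existence, uniqueness and size bounds for $\psi$ at fixed parameters — no Lipschitz modulus in $(\mu_1,\xi)$ — and deriving one is nontrivial because the nonlinearity $\mathcal{N}$ involves $U\big((x-\xi)/\mu\big)$, whose derivatives with respect to $\mu,\xi$ are singular at the bubble scale. The paper sidesteps this entirely: it proves only that the map $(\dot{\mu}_1,\dot{\xi}) \mapsto (\mathcal{S}_6, \vec{\mathcal{S}})$ is \emph{compact} on the ball $B_{\dot{\mu}_1}\times B_{\dot{\xi}}$, using uniform H\"older bounds from parabolic regularity and an Arzel\`a–Ascoli / diagonal argument, and then invokes Schauder's fixed-point theorem. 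This avoids any quantitative contraction. Your route is not impossible in principle, but as written it leaves the key Lipschitz step unproven, and that step is precisely what the paper's choice of Schauder over Banach is designed to avoid.
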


\begin{proof}

First, let us consider the general dimension $n$. By \eqref{H-def}, \eqref{orth-eq} is equivalent to
\begin{align}
&
\dot{\mu}
=
-
\frac{n+2}{n-2}
\left(
\int_{B_{4R}} Z^2_{n+1}(y) dy
\right)^{-1}
\mu^{\frac{n}{2}-2}
\int_{B_{4R}}	\left( \Psi_0(\mu y+\xi,t)
+
\psi(\mu y+\xi, t)
\right) U(y)^{\frac{4}{n-2}} Z_{n+1}(y)    dy ,
\label{mu-eq1}
\\
&
 \dot{\xi} =  \vec{\mathcal{S}}[\mu_1,\xi]
 :=
 \left(\mathcal{S}_1[\mu_1,\xi],\dots,\mathcal{S}_n[\mu_1,\xi] \right)
 ,
 \quad \mbox{ \ for $i=1,2,\dots,n$, \ }
\label{xi-eq1} \\
& \mathcal{S}_i[\mu_1,\xi] :=
-
\frac{n+2}{n-2}
\left( \int_{B_{4R }}
Z_i^2(y) dy \right)^{-1}
\mu^{\frac{n}{2}-2}  \notag  \\
&\qquad\qquad\quad\times
\int_{B_{4R }}
\Big[
\Psi_0(\mu y+\xi,t)
-
\Psi_0(0,t)
+
\psi(\mu y+\xi, t)
-
\psi(0, t)
\Big]
U(y)^{\frac{4}{n-2}} Z_i(y) dy,
\notag
\end{align}
 where we have used the parity of $Z_i(y)$. By $\mu=\mu_0+\mu_1$ and $\mu_0$ satisfying \eqref{mu0-eq},
 we rewrite \eqref{mu-eq1} as
\begin{equation}\label{mu1-eq1}
\dot{\mu}_1
+
\beta(t) \mu_1
=  \mathcal{F} [\mu_1,\xi](t)
,
\end{equation}
where
\begin{equation}
\begin{aligned}
	\beta(t) := \ &
\frac{n+2}{n-2}
\left(
\int_{B_{4R}} Z^2_{n+1}(y) dy
\right)^{-1}
\frac{n-4}{2}
\mu_0^{\frac{n}{2}-3}
\Psi_0(0,t)
\int_{B_{4R}}	  U(y)^{\frac{4}{n-2}} Z_{n+1}(y)    dy
\\
= \ &
\frac{n-4}{n-6}
\frac{A(R)\Psi_0(0,t)}{\int_{M}^{t} A(R(s))\Psi_0(0, s) d s}
\end{aligned}
\end{equation}
with the application of \eqref{mu-0-explicit} in the last step;
\begin{equation}
\begin{aligned}
&
\mathcal{F} [\mu_1,\xi](t):=
-
\frac{n+2}{n-2}
\left(
\int_{B_{4R}} Z^2_{n+1}(y) dy
\right)^{-1}
\bigg[
\mu^{\frac{n}{2}-2}
\int_{B_{4R}}
\psi(\mu y+\xi, t)  U(y)^{\frac{4}{n-2}} Z_{n+1}(y)    dy
\\
&\qquad +
\mu^{\frac{n}{2}-2}
\int_{B_{4R}}
\left(
\Psi_0(\mu y+\xi,t)
-
\Psi_0(0,t)
\right) U(y)^{\frac{4}{n-2}} Z_{n+1}(y)    dy
\\
&\qquad +
\left(
\mu^{\frac{n}{2}-2}
-
\mu_0^{\frac{n}{2}-2}
-
\frac{n-4}{2}
\mu_0^{\frac{n}{2}-3} \mu_1
\right)
\Psi_0(0,t)
\int_{B_{4R}}	  U(y)^{\frac{4}{n-2}} Z_{n+1}(y)    dy
\bigg] .
\end{aligned}
\end{equation}

In order to find a solution $(\mu_1,\xi)$ for the system \eqref{xi-eq1}-\eqref{mu1-eq1}, it suffices to solve the following
fixed point problem about $\dot{\mu}_1, \dot{\xi}$,
\begin{equation}\label{mu1-xi-sys}
\begin{aligned}
	\dot{\mu}_1
= \ &
\mathcal{S}_{n+1}[\mu_1,\xi]
:=
\frac{d}{dt} \left( \int_{\tilde{t}_0}^t \mathcal{F} [\mu_1,\xi](s)  e^{\int_t^s \beta(a)da }
ds \right)
=
-\beta(t)
\int_{\tilde{t}_0}^t \mathcal{F} [\mu_1,\xi](s)  e^{\int_t^s \beta(a)da }
ds
+
\mathcal{F} [\mu_1,\xi](t)
 ,
\\
\mu_1
= \ &
\mu_1[\dot{\mu}_1](t)
:=\int_{\tilde{t}_0}^t \dot{\mu}_1(a) da ,
\quad
\dot{\xi} =  \vec{\mathcal{S}}[\mu_1,\xi],
\quad
\xi
=
\xi[\dot{\xi}](t)
:=\int_{\tilde{t}_0}^t \dot{\xi}(a) da
\mbox{ \ with \ }
 \tilde{t}_0 :=
\begin{cases}
	t_0, &   \gamma\le 2
	\\
	\infty,
	&  \gamma > 2
\end{cases}
\end{aligned}
\end{equation}
if these integrals are well-defined.

Hereafter, we take $n=5$. By \eqref{Psi0-0-est} and \eqref{AR-def},  we have
\begin{equation}\label{beta-sim}
	\left\{
	\begin{aligned}
		&
		-
		\frac{D_1}{D_0}
		\left( 1-
		\frac{\gamma}{2}
		\right) t^{-1}
		\left( 1+ O(R^{-\frac{1}{2}}) \right) \le \beta(t) \le
		-
		\frac{D_0}{D_1} \left( 1-
		\frac{\gamma}{2}
		\right) t^{-1}
		\left( 1+ O(R^{-\frac{1}{2} }) \right) & & \mbox{ \ if \ } \gamma<2 ,
		\\
		&
		-
		\frac{D_1}{D_0}
		(t \ln t)^{-1}
		\left( 1+ O(R^{-\frac{1}{2} }) \right) \le \beta(t) \le
		-
		\frac{D_0}{D_1}  (t \ln t)^{-1}
		\left( 1+ O(R^{-\frac{1}{2} }) \right) & &  \mbox{ \ if \ } \gamma = 2 ,
		\\
		&
	 \beta(t) \sim -v_{5,\gamma} & &
		\mbox{ \ if \ } \gamma > 2 .
	\end{aligned}
	\right.
\end{equation}
We will solve the system \eqref{mu1-xi-sys} in the space
\begin{equation}
B_{\dot{\mu}_1}:=\left\{
f\in C[t_0,\infty) \ | \
\|f\|_{\dot{\mu}_1} \le 1
\right\},
\quad
B_{\dot{\xi}}=\left\{ \vec{f}=\left( f_1,\dots, f_5 \right) \in C[t_0,\infty) \ | \ \| \vec{f} \|_{\dot{\xi}} \le 1 \right\}
\end{equation}
with the norm
\begin{equation}\label{mu-xi-norm}
\|f\|_{\dot{\mu}_1}:=\sup_{t\ge t_0} \left( \mu_{0*}^{\frac{1}{2}} v_{5,\tilde{\gamma}} R^{-\frac{2}{3}}
 \right)^{-1}(t)  \left| f(t) \right|,
 \quad
 \| \vec{f} \|_{\dot{\xi}}
 :=
 \sup\limits_{t\ge t_0} \left(
 \mu_{0*}^{\frac{1}{2}}  v_{5,\tilde{\gamma}} R^{-\frac{7}{4}}
 \right)^{-1}(t) | \vec{f}(t) | ,
\end{equation}
where
$$
\mu_{0*}^{\frac{1}{2}} v_{5,\tilde{\gamma}}
=
\begin{cases}
	t^{1-\gamma}, & \gamma<2
	\\
	t^{-1} \ln t,
	& \gamma =2
	\\
	t^{-\frac{\tilde{\gamma}}{2}} ,
	&
	\gamma >2
\end{cases}.
$$
For any $(\dot{\mu}_1, \dot{\xi} ) \in B_{\dot{\mu}_1} \times B_{\dot{\xi}}$, it is easy to see that $\int_{\tilde{t}_0}^t \dot{\mu}_1(a) da$ and $\int_{\tilde{t}_0}^t \dot{\xi}(a) da$ in \eqref{mu1-xi-sys} are well-defined, and
\begin{equation}
\left|\mu_1  \right|
\lesssim
\begin{cases}
t^{2-\gamma} R^{-\frac{2}{3}},
&
\gamma<2
\\
\left( \ln t\right)^2 R^{-\frac{2}{3}}
,
&
\gamma=2
\\
t^{1-\frac{\tilde{\gamma}}{2}}
R^{-\frac{2}{3}}
,
&
\gamma>2
\end{cases}
\quad
\lesssim
\mu_{0*} R^{-\frac{2}{3}},
\quad
\left| \xi \right| \lesssim
\mu_{0*} R^{-\frac{7}{4}} .
\end{equation}
Thus, $\mu_1$, $\dot{\mu}_1$, $\xi$, $\dot{\xi}$ satisfy the assumption
\eqref{mu-xi-rou-bound} in Proposition \ref{outer-exist}.
By \eqref{nab-Psi0-upp}, \eqref{nabla-psi-est}, and $\gamma>\frac{3}{2}$, we get
\begin{equation}\label{vec-S-est}
	\left| \vec{\mathcal{S}}[\mu_1,\xi] \right|
	\lesssim
	\mu^{\frac{1}{2}}
	\left(
	\| \nabla_x \Psi_0(\cdot,t)\|_{L^\infty(\mathbb{R}^5)}
	+
	\| \nabla_x \psi(\cdot,t)\|_{L^\infty(\mathbb{R}^5) }
	\right)
	\left( |\mu|+ |\xi| \right)
	\lesssim
	\mu_{0*}^{\frac{1}{2}}
	v_{5,\tilde{\gamma}} R^{-2} \ln^2 R  .
\end{equation}
Using \eqref{psi-est}, \eqref{nab-Psi0-upp}, \eqref{Psi0-0-est} in order, we have
\begin{equation*}
\begin{aligned}
&
	\left|
	\mu^{\frac{1}{2}}
	\int_{B_{4R}}
	\psi(\mu y+\xi, t) U(y)^{\frac{4}{3}}
	Z_{6}(y) dy
	\right|
	\lesssim
	\mu_{0*}^{\frac{1}{2}} v_{5,\tilde{\gamma}} R^{-1} \ln^2 R
,
\\
&
\left| \mu^{\frac{1}{2}}
\int_{B_{4R}} \left(
\Psi_0(\mu y+\xi,t)
-
\Psi_0(0,t)
\right) U(y)^{\frac{4}{3}} Z_{6}(y)  dy \right|
\lesssim
\mu^{\frac{1}{2}}
\left( \mu+ |\xi|\right)
t^{-\frac{1}{2}} v_{5,\gamma}
\sim
\mu_{0*}^{\frac{3}{2}}
t^{-\frac{1}{2}}
v_{5,\gamma} ,
\\
&
\left| \left(
\mu^{\frac{1}{2}}
-
\mu_0^{\frac{1}{2}}
-
\frac{1}{2}
\mu_0^{-\frac{1}{2}} \mu_1
\right)
\Psi_0(0,t)
\int_{B_{4R}}	  U(y)^{\frac{4}{3}} Z_{6}(y)    dy \right|
\lesssim
\mu_{0*}^{\frac{1}{2}} v_{5,\gamma} R^{-\frac{4}{3}} ,
\end{aligned}
\end{equation*}
which implies
\begin{equation}\label{F-upp}
\left|\mathcal{F}[\mu_1,\xi](t) \right| \lesssim
\mu_{0*}^{\frac{1}{2}} v_{5,\tilde{\gamma}} R^{-\frac{3}{4}}
.
\end{equation}

Since $0<D_0\le D_1 <2D_0$, there exists $\epsilon_1>0$ sufficiently small so that $\frac{D_1}{D_0} (1+\epsilon_1) <2$. By taking $t_0$ sufficiently large, which can depend on $\gamma$, and using \eqref{beta-sim}, we obtain that
for $\gamma<2$,
\begin{equation*}
	\begin{aligned}
		&
		\left|
		\int_{\tilde{t}_0}^t \mathcal{F}[\mu_1,\xi](s)  e^{\int_t^s \beta(a)da }
		ds \right|
		\lesssim
		\int_{t_0}^t s^{1-\gamma}
		(\ln \ln s)^{-\frac{3}{4} } e^{\frac{D_1}{D_0} \left( 1-\frac{\gamma}{2}\right)\left(1+\epsilon_1\right) \int_s^t a^{-1} da }
		ds
		\\
		= \ &
		t^{\frac{D_1}{D_0} \left( 1-\frac{\gamma}{2}\right)\left(1+\epsilon_1\right)}
		\int_{t_0}^t s^{1-\gamma - \frac{D_1}{D_0} \left( 1-\frac{\gamma}{2}\right)\left(1+\epsilon_1\right)}
		(\ln \ln s)^{-\frac{3}{4}}
		ds
		\lesssim
		t^{2-\gamma} R^{-\frac{3}{4}}   ;
	\end{aligned}
\end{equation*}
for $\gamma=2$,
\begin{equation*}
\begin{aligned}
&
\left|
\int_{\tilde{t}_0}^t \mathcal{F}[\mu_1,\xi](s)  e^{\int_t^s \beta(a)da }
ds \right|
\lesssim
\int_{t_0}^t
s^{-1} \ln s
(\ln \ln s)^{-\frac{3}{4} }
 e^{\frac{D_1}{D_0} \left(1+\epsilon_1\right) \int_s^t \left( a\ln a\right)^{-1} da }
ds
\\
= \ &
(\ln t)^{\frac{D_1}{D_0} (1+\epsilon_1)}
\int_{t_0}^t
s^{-1} (\ln s)^{1-\frac{D_1}{D_0} (1+\epsilon_1)}
(\ln \ln s)^{-\frac{3}{4} }
ds
\\
= \ &
(\ln t)^{\frac{D_1}{D_0} (1+\epsilon_1)}
\int_{\ln t_0}^{\ln t}
z^{1-\frac{D_1}{D_0} (1+\epsilon_1)}
(\ln z)^{-\frac{3}{4} }
dz
\lesssim
(\ln t)^2 R^{-\frac{3}{4}}  ;
\end{aligned}
\end{equation*}
for $\gamma>2$,
\begin{equation*}
\left|
\int_{\tilde{t}_0}^t \mathcal{F}[\mu_1,\xi](s)  e^{\int_t^s \beta(a)da }
ds \right|
\lesssim
\int_t^\infty
s^{-\frac{\tilde{\gamma}}{2}}
\left( \ln \ln s\right)^{-\frac{3}{4} }
ds
\lesssim
t^{1-\frac{\tilde{\gamma}}{2}}
R^{-\frac{3}{4}} .
\end{equation*}
Thus, $\int_{\tilde{t}_0}^t \mathcal{F}[\mu_1,\xi](s)  e^{\int_t^s \beta(a)da }
ds$ is well-defined in \eqref{mu1-xi-sys}, and
\begin{equation}\label{F-int-upp}
	\left| \beta(t)
	\int_{\tilde{t}_0}^t \mathcal{F}[\mu_1,\xi](s)  e^{\int_t^s \beta(a)da }
	ds \right|
	\lesssim
	\mu_{0*}^{\frac{1}{2}} v_{5,\tilde{\gamma}} R^{-\frac{3}{4}}
	.
\end{equation}
Combining \eqref{vec-S-est}, \eqref{F-upp} and \eqref{F-int-upp}, we have
\begin{equation}\label{Si-est}
\left|	\mathcal{S}_{6}[\mu_1,\xi] \right| \lesssim  \mu_{0*}^{\frac{1}{2}} v_{5,\tilde{\gamma}} R^{-\frac{3}{4}},
	\quad
	\left| \vec{\mathcal{S} } [\mu_1,\xi]  \right|
\lesssim
\mu_{0*}^{\frac{1}{2}}
v_{5,\tilde{\gamma}} R^{-2} \ln^2 R
	,
\end{equation}
which implies
$\big( \mathcal{S}_{6} ,
\vec{\mathcal{S} }  \big)[\mu_1,\xi]
\in B_{\dot{\mu}_1} \times B_{\dot{\xi}} $.

For any sequence $(\dot{\mu}_{1}^{[j]},\dot{\xi}^{[j]})_{j\ge 1} \subset B_{\dot{\mu}_1} \times B_{\dot{\xi}}$, denote $\mu_1^{[j]} =\int_{\tilde{t}_0}^t \dot{\mu}_1^{[j]}(a) da $,
$
\xi^{[j]} =\int_{\tilde{t}_0}^t \dot{\xi}^{[j]}(a) da $. We set $\tilde{\dot{\mu}}_1^{[j]}:=\mathcal{S}_{6}[\mu_1^{[j]},\xi^{[j]}]$, $\tilde{\dot{\xi}}^{[j]}:= \vec{\mathcal{S} }[\mu_1^{[j]},\xi^{[j]}]$.
By the same method for deducing \eqref{Si-est}, we have
\begin{equation}\label{mu-xi-j-est}
|	\tilde{\dot{\mu}}_1^{[j]}  | \le C_1  \mu_{0*}^{\frac{1}{2}} v_{5,\tilde{\gamma}} R^{-\frac{3}{4}},
\quad
 |  \tilde{\dot{\xi}}^{[j]} |
\le
C_1 \mu_{0*}^{\frac{1}{2}}
v_{5,\tilde{\gamma}} R^{-2} \ln^2 R
\quad
\mbox{ \ for all \ }  j\ge 1
\end{equation}
with a constant $C_1>0$ independent of $j$.

For any compact subset $K \subset \subset [t_0,\infty)$, by the equation \eqref{mu1-xi-sys} and the space-time regularity for the outer solution $\psi$,
for all $j\ge 1$,
$\tilde{\dot{\mu}}_1^{[j]}$ and $\tilde{\dot{\xi}}^{[j]}$ are uniformly H\"older continuous in $K$. Since there exist countable compact sets to saturate $[t_0,\infty)$,
then up to a subsequence, for any compact set $K \subset \subset [t_0,\infty)$,
\begin{equation*}
\tilde{\dot{\mu}}_1^{[j]}
\rightarrow g, \quad \tilde{\dot{\xi}}^{[j]} \rightarrow
\vec{g} \mbox{ \ in \ } L^\infty(K)
\mbox{ \ as \ } j\rightarrow\infty
\end{equation*}
for some $g,~\vec{g} \in C[t_0,\infty)$. By \eqref{mu-xi-j-est}, we have
\begin{equation*}
	|g  | \le C_1  \mu_{0*}^{\frac{1}{2}} v_{5,\tilde{\gamma}} R^{-\frac{3}{4}},
	\quad
	| \vec{g}|
	\le
	C_1 \mu_{0*}^{\frac{1}{2}}
	v_{5,\tilde{\gamma}} R^{-2} \ln^2 R .
\end{equation*}
Thus, for any $\epsilon_1>0$, there exists $t_1$ sufficiently large such that for all $j\ge 1$,
\begin{equation*}
 \sup_{t\ge t_1} \left( \mu_{0*}^{\frac{1}{2}} v_{5,\tilde{\gamma}} R^{-\frac{2}{3}}
	\right)^{-1}(t)  \left| \left( \tilde{\dot{\mu}}_1^{[j]}  - g \right)(t) \right|
	+
	\sup\limits_{t\ge t_1} \left(
	\mu_{0*}^{\frac{1}{2}}  v_{5,\tilde{\gamma}} R^{-\frac{7}{4}}
	\right)^{-1}(t) \left| \left( \tilde{\dot{\xi}}^{[j]} -
	\vec{g} \right)(t) \right| <\epsilon_1 .
\end{equation*}
Additionally,
\begin{equation*}
	\lim\limits_{j\rightarrow \infty}
\bigg[ 	\sup_{t_0\le t \le t_1} \left( \mu_{0*}^{\frac{1}{2}} v_{5,\tilde{\gamma}} R^{-\frac{2}{3}}
	\right)^{-1}(t)  \left| \left( \tilde{\dot{\mu}}_1^{[j]}  - g \right)(t) \right|
	+
	\sup\limits_{t_0\le t \le t_1} \left(
	\mu_{0*}^{\frac{1}{2}}  v_{5,\tilde{\gamma}} R^{-\frac{7}{4}}
	\right)^{-1}(t) \left| \left( \tilde{\dot{\xi}}^{[j]} -
	\vec{g} \right)(t) \right| \bigg]  =0 .
\end{equation*}
Consequently, $\lim\limits_{j\rightarrow \infty} \left(\|\tilde{\dot{\mu}}_1^{[j]}  - g\|_{\dot{\mu}_1} + \| \tilde{\dot{\xi}}^{[j]} -
\vec{g} \|_{\dot{\xi}} \right) =0$, which implies
$\big( \mathcal{S}_{6} ,
\vec{\mathcal{S} }  \big)[\mu_1,\xi] $ is a compact mapping on $B_{\dot{\mu}_1} \times B_{\dot{\xi}}$.

By the Schauder fixed-point theorem, there exists a solution $(\dot{\mu}_1, \dot{\xi} ) \in B_{\dot{\mu}_1} \times B_{\dot{\xi}}$ for the system \eqref{mu1-xi-sys}.

\end{proof}

\medskip

\section{Solving the inner problem}

By \eqref{mu-xi-final}, \eqref{nab-Psi0-upp}, \eqref{psi-est}, and \eqref{til-v-def}, for $|y|\le 4R$,
\begin{equation*}
\begin{aligned}
&
\left|
\mu \dot{\xi} \cdot \left(\nabla U\right)(y)
	+\frac{7}{3}\mu^{\frac{3}{2}} U(y)^{\frac{4}{3}}
	\Big(
	\Psi_0(\mu y+\xi,t)
	-
	\Psi_0(0,t)
	+
	\psi(\mu y+\xi, t)
	\Big)
\right|
\\
\lesssim \ &
\mu_{0*}^{\frac{3}{2}} v_{5,\tilde{\gamma}} R^{-1} \ln^2 R
\langle y \rangle^{-4}
\sim    \tilde{v}_{5,\tilde{\gamma}}(\tau(t)) R^{-1}  \ln^2 R
\langle y \rangle^{-4}
 .
\end{aligned}
\end{equation*}
For brevity, denote $\tilde{H}[\phi]:=
\mathcal{H}\left[\psi\big[\phi,\mu_1[\phi],\xi[\phi] \big],\mu_0+\mu_1[\phi],\xi[\phi]\right]$. From \eqref{leading-H-est}, we have
\begin{equation}\label{H-final-upp}
| \tilde{H}[\phi] |
\lesssim
\tilde{v}_{5,\tilde{\gamma}}(\tau(t))
\langle y \rangle^{-3} .
\end{equation}

By Proposition \ref{mu1-xi-prop}, we can apply Proposition \ref{phi-upper-bdd} to the inner problem \eqref{inner-tau-eq}, and it suffices to solve the following fixed-point problem
\begin{equation*}
\phi =
\mathcal{T}_{\rm{in}}\big[ \tilde{H}[\phi] \big]
 .
\end{equation*}
Indeed, for any $\phi \in B_{\rm{in}}$, given $t_0$ (i.e. $\tau_0$) sufficiently large, by Proposition \ref{phi-upper-bdd}, we have
\begin{equation}\label{2023-0801-1}
	\langle y\rangle \left|\nabla_y \mathcal{T}_{\rm{in}}\big[ \tilde{H}[\phi] \big] \right|
	+
\left|  \mathcal{T}_{\rm{in}}\big[ \tilde{H}[\phi] \big] \right| \lesssim \tilde{v}_{5,\tilde{\gamma} }(\tau)R^{5}\ln R \langle y\rangle^{-6} ,
\quad
\left|  \mathcal{T}_{e_0}\big[ \tilde{H}[\phi] \big] \right|\lesssim \tilde{v}_{5,\tilde{\gamma} }(\tau_0)R(\tau_0) ,
\end{equation}
which implies $\mathcal{T}_{\rm{in}}\big[ \tilde{H}[\phi] \big] \in B_{\rm{in}}$ in particular.

For any sequence $(\phi_{j} )_{j\ge 1} \in B_{\rm{in}}$, denote $\tilde{\phi}_{j}:= \mathcal{T}_{\rm{in}}\big[ \tilde{H}[\phi_j] \big]$, $\tilde{e}_j:=   \mathcal{T}_{e_0}\big[ \tilde{H}[\phi_j] \big]$, which satisfies
\begin{equation*}
\begin{cases}
	\partial_{\tau} \tilde{\phi}_j=\Delta_y \tilde{\phi}_j +\frac{7}{3}U(y)^{\frac{4}{3}}
	\tilde{\phi}_j
	+  \tilde{H}[\phi_j]
	&
	\mbox{ \ in \ } \mathcal{D}_{4R} :=
	\left\{ (y,\tau) \ | \ \tau\in (\tau_0, \infty),
	\quad y\in B_{4R(t(\tau))}  \right\}
\\
\tilde{\phi}_j(\cdot,\tau_0) =
\tilde{e}_j Z_0
&
\mbox{ \ in \ } B_{4R(t_0)} .
\end{cases}
\end{equation*}
Repeating the process for deducing \eqref{H-final-upp} and \eqref{2023-0801-1}, one sees that there exists a constant $C_1$ independent of $j$ such that
\begin{equation}\label{2023-0801-2}
| \tilde{H}[\phi_j] |
\le C_1
\tilde{v}_{5,\tilde{\gamma}}(\tau)
\langle y \rangle^{-3},
\quad
	\langle y\rangle \left|\nabla_y \tilde{\phi}_j \right|
	+
	\left| \tilde{\phi}_j \right| \le C_1 \tilde{v}_{5,\tilde{\gamma} }(\tau)R^{5}\ln R \langle y\rangle^{-6} ,
	\quad
	\left| \tilde{e}_j \right|\le C_1 \tilde{v}_{5,\tilde{\gamma} }(\tau_0)R(t_0) .
\end{equation}
By the parabolic regularity theory, for any compact  set $K\subset \subset \mathcal{D}_{3R} \cup (B_{3R(t_0)} \times \{ \tau_0\} )$, it holds that $\| \phi_j\|_{C^{1+\ell, \frac{1+\ell}{2}}(K)} \le C_2$ with  a constant $C_2$ independent of $j$
and a constant  $\ell\in(0,1)$. By Arzel\`a-Ascoli theorem, up to a subsequence, there exists a function $g$ which is $C^1$ in space, such that
\begin{equation*}
\tilde{\phi}_j \rightarrow g,
\quad
\nabla_y \tilde{\phi}_j \rightarrow \nabla_y  g \mbox{ \ in \ }
L^\infty(K) \mbox{ \ as \ } j\rightarrow\infty.
\end{equation*}
By \eqref{2023-0801-2}, we have
\begin{equation*}
	\langle y\rangle \left|\nabla_y g \right|
	+
	\left| g \right| \le C_1 \tilde{v}_{5,\tilde{\gamma} }(\tau)R^{5}\ln R \langle y\rangle^{-6} \mbox{ \ in \ } \mathcal{D}_{3R} .
\end{equation*}
For any $\epsilon_1>0$, there exists $\tau_1$ sufficiently large such that
\begin{equation*}
\sup_{\tau>\tau_1, ~y\in B_{2R(t(\tau))}}
	\left(\tilde{v}_{5,\tilde{\gamma} }(\tau)R^{5}(t(\tau)) \ln^2 \left( R( t(\tau) ) \right) \right)^{-1}
	\langle y\rangle^{6}
	\big(\langle y\rangle \left|\nabla_y (\tilde{\phi}_j - g)(y,\tau) \right| + |(\tilde{\phi}_j - g)(y,\tau)| \big) <\epsilon_1 ,
\end{equation*}
and
\begin{equation*}
\lim\limits_{j \rightarrow \infty}	\sup_{\tau_0\le \tau\le \tau_1, ~y\in B_{2R(t(\tau))}}
	\left(\tilde{v}_{5,\tilde{\gamma} }(\tau)R^{5}(t(\tau)) \ln^2 \left( R( t(\tau) ) \right) \right)^{-1}
	\langle y\rangle^{6}
	\big(\langle y\rangle \left|\nabla_y (\tilde{\phi}_j - g)(y,\tau) \right| + |(\tilde{\phi}_j - g)(y,\tau)| \big) =0 .
\end{equation*}
Thus $\|\tilde{\phi}_j - g\|_{\rm{in}} \rightarrow 0$, which implies $\mathcal{T}_{\rm{in}}\big[ \tilde{H}[\phi] \big]$ is a compact mapping on $B_{\rm{in}}$. By the Schauder fixed-point theorem, there exists a solution $\phi\in B_{\rm{in}}$ and thus the construction is complete.

\medskip

\section{Properties of the solution $u$}

Recall $u$ given in \eqref{u-def}.  By \eqref{Psi0-rough-upp},
$\psi$ given by Proposition \ref{outer-exist}, $\phi$ solved in $B_{\rm{in}}$ (see \eqref{Bin-def}), $\mu_1,\xi$ given in Proposition \ref{mu1-xi-prop}, we have the validity of \eqref{u-behavior}. The initial value
\begin{equation*}
u(x,t_0) =
\mu^{-\frac{3}{2}}(t_0) U \left(\frac{x-\xi(t_0)}{\mu(t_0)} \right)
\eta\left( \frac{x-\xi(t_0)}{\sqrt{t_0}}  \right)
+
\Psi_0(x,t_0)
+
\mu^{-\frac{3}{2}}(t_0)
\phi\left(\frac{x-\xi(t_0)}{\mu(t_0)},t_0 \right)
\eta\left(\frac{x-\xi(t_0)}{\mu(t_0) R(t_0) }\right) .
\end{equation*}
Here $\Psi_0 > 0$. Denote $y(t_0)=\frac{x-\xi(t_0)}{\mu(t_0)}$, then
\begin{equation*}
U(y(t_0))-\phi(y(t_0),t_0)
\ge
15^{\frac{3}{4}} \langle y(t_0) \rangle^{-3}
-
C
\langle y(t_0) \rangle^{-6}
R^5(t_0) \ln^2 R(t_0)
\begin{cases}
t_0^{3-2\gamma},
& \frac{3}{2}<\gamma <2
\\
t_0^{-1} (\ln t_0)^3 ,
&\gamma =2
\\
t_0^{-\frac{\tilde{\gamma}}{2}},
&
\gamma>2
\end{cases}
>0
\end{equation*}
for $t_0$ large enough.
Therefore, $u(x,t_0)>0$, which implies $u>0$ by the maximum principle.
In addition, by Lemma \ref{initial-limit-lem}, we get \eqref{u-limit}. Finally, we conclude the proof of Theorem \ref{5d-main-th}.

 \medskip

\appendix

\section{Proof of Lemma \ref{Cauchy-est-x=0}}\label{Cauchy-est-App}

\begin{proof}[Proof of Lemma \ref{Cauchy-est-x=0}]
	For $\gamma<n$, $t\ge 1$,
	\begin{equation*}
		\begin{aligned}
			&
			\left(4\pi t\right)^{-\frac n2} \int_{\RR^n}
			e^{-\frac{|y|^2}{4t} }
			\langle y \rangle^{-\gamma} dy
			=
			(4\pi)^{-\frac n2}  t^{-\frac{\gamma}{2}} \int_{\RR^n}
			e^{-\frac{|z|^2}{4 } }
			\left( |z|^2 + t^{-1} \right)^{-\frac{\gamma}{2}} d z
			\\
			= \ &
			t^{-\frac{\gamma}{2}} (4\pi)^{-\frac n2} \int_{\RR^n}
			e^{-\frac{|z|^2}{4 } }
			|z|^{- \gamma } d z
			+ O\Big( t^{-\frac{\gamma}{2}}
			\begin{cases}
				t^{-1},   & \gamma <n-2
				\\
				t^{-1} \langle \ln t \rangle,  &
				\gamma =n-2
				\\
				t^{ \frac{\gamma -n }{2}},  &
				n-2 < \gamma <n
			\end{cases}
			\Big)
		\end{aligned}
	\end{equation*}
	since
\begin{equation*}
	\begin{aligned}
		&
		\left|
		\left(\int_{|z|\le t^{-\frac 12}} + \int_{|z| > t^{-\frac 12}}  \right)
		 \int_{\RR^n}
		e^{-\frac{|z|^2}{4 } }
		\left[ \left( |z|^2 + t^{-1} \right)^{-\frac{\gamma}{2}}
		-
		|z|^{- \gamma } \right]  d z
		\right|
		\\
		\lesssim \ &
		\int_{|z|\le t^{-\frac 12}}
		\begin{cases}
		t^{\frac{\gamma}{2}},
		& \gamma<0
		\\
		|z|^{-\gamma},
		& \gamma \ge 0
		\end{cases}  d z
		+
		\int_{|z| > t^{-\frac 12}}
		e^{-\frac{|z|^2}{4 } }
		|z|^{-\gamma-2}  t^{-1}  d z
		\\
		\lesssim \ &
		t^{ \frac{\gamma -n }{2}}
		+
		\begin{cases}
			t^{-1},   & \gamma <n-2
			\\
			t^{-1} \langle \ln t \rangle,  & \gamma =n-2
			\\
			t^{ \frac{\gamma -n }{2}} , & n-2 < \gamma <n
		\end{cases}
		\sim
		\begin{cases}
			t^{-1},   & \gamma <n-2
			\\
			t^{-1} \langle \ln t \rangle,  & \gamma =n-2
			\\
			t^{ \frac{\gamma -n }{2}},  & n-2 < \gamma <n .
		\end{cases}
	\end{aligned}
\end{equation*}
	
	For $\gamma=n$, $t\ge 1$,
	\begin{equation*}
		\left(4\pi t\right)^{-\frac n2} \int_{\RR^n}
		e^{-\frac{|y|^2}{4t} }
		\langle y \rangle^{-n } dy
		=
		t^{-\frac n2} \ln (1+t)
		\left(4\pi \right)^{-\frac n2} \frac{1}{2} \left|S^{n-1} \right|
		\left(1+ O\left( ( \ln (1+t)  )^{-1} \right) \right)
	\end{equation*}
	since
	\begin{equation*}
	\begin{aligned}
	&
		\left(4\pi t\right)^{-\frac n2} \int_{|y|\ge t^{\frac 12} }
		e^{-\frac{|y|^2}{4t} }
		\langle y \rangle^{-n } dy
		\sim
		t^{-\frac n2} \int_{\frac{1}{4} }^\infty
		e^{-z } z^{-1} d z,
\\
	&
	\left|	\left(4\pi t\right)^{-\frac n2} \int_{ |y| < t^{\frac 12}  }
		\left(e^{-\frac{|y|^2}{4t} } - 1 \right)
		\langle y \rangle^{-n } dy
		\right|
		\lesssim
		t^{-\frac n2} \int_{ |y| < t^{\frac 12}  }
		t^{-1} |y|^2
		\langle y \rangle^{-n } dy
		\sim
		t^{-\frac n2} ,
		\\
	&	
\begin{aligned}
	\left(4\pi t\right)^{-\frac n2} \int_{ |y| < t^{\frac 12}  }
	\langle y \rangle^{-n } dy
	= \ &
	t^{-\frac n2}
	\left(4\pi \right)^{-\frac n2} \frac{1}{2} \left|S^{n-1}\right| \int_0^{t }
	\left[
	\frac{z^{\frac{n-2}{2}}}{(1+z)^{\frac{n}{2}}}
	-
	\frac{1}{1+z}
	+
	\frac{1}{1+z}
	\right]
	d z
	\\
	= \ &
	t^{-\frac n2}
	\left(4\pi \right)^{-\frac n2} \frac{1}{2} \left|S^{n-1}\right|
	\left(\ln (1+t) + O(1) \right) .
\end{aligned}
	\end{aligned}
	\end{equation*}
	
	For $\gamma>n$, $t\ge 1$,
	\begin{equation*}
		\left(4\pi t\right)^{-\frac n2} \int_{\RR^n}
		e^{-\frac{|y|^2}{4t} }
		\langle y \rangle^{-\gamma} dy
		=
		t^{-\frac n2}
		\left(4\pi \right)^{-\frac n2} \int_{\RR^n}
		\langle y \rangle^{-\gamma} dy
		+
		O
		\Big(
		t^{-\frac n2}
		\begin{cases}
			t^{\frac{n-\gamma}{2}},
			&
			n<\gamma<n+2
			\\
			t^{-1}  \langle \ln t \rangle,
			&
			\gamma=n+2
			\\
			t^{-1} ,
			&
			\gamma>n+2
		\end{cases}
		\Big)
	\end{equation*}
	since
	\begin{equation*}
		\begin{aligned}
			&
			\left|\left(4\pi t\right)^{-\frac n2} \left(\int_{|y|\le t^{\frac 12}} + \int_{|y|> t^{\frac 12}}  \right)
			\left(
			e^{-\frac{|y|^2}{4t} } -1\right)
			\langle y \rangle^{-\gamma} dy
			\right|
			\\
			\lesssim \ &
			t^{-\frac n2}
			\left(  \int_{|y|\le t^{\frac 12}}
			t^{-1} |y|^2
			\langle y \rangle^{-\gamma} dy
			+ \int_{|y|> t^{\frac 12}}
			\langle y \rangle^{-\gamma} dy
			\right)
			\lesssim
			t^{-\frac n2}
			\begin{cases}
				t^{\frac{n-\gamma}{2}},
				&
				n<\gamma<n+2
				\\
				t^{-1}  \langle \ln t \rangle,
				&
				\gamma=n+2
				\\
				t^{-1} ,
				&
				\gamma>n+2 .
			\end{cases}
		\end{aligned}
	\end{equation*}
	
\end{proof}

\medskip

\section{The limitation of $\int_{\mathbb{R}^n}
	e^{-A|x-y|^2}
	\langle y \rangle^{-b} dy$}

\begin{lemma}\label{initial-limit-lem}
	
	For $n>0$, $A>0$, $b\in \mathbb{R}$, we have
	\begin{equation*}
		\begin{aligned}
			&
			\int_{\mathbb{R}^n}
			e^{-A|x-y|^2}
			\langle y \rangle^{-b} dy
			=
			\langle x \rangle^{-b}
			\int_{\mathbb{R}^n} e^{-A|z|^2}
			dz
			\\
			&
			\qquad
			+
			O\bigg(
			|x|
			\langle x \rangle^{-b-2}
			\left(
			|x|^{n+1}\1_{|x|\le 1}
			+
			\1_{|x|> 1}
			\right)
			+
			\int_{|z|> \frac{|x|}{2}} e^{-A|z|^2}
			\left(|z|+\langle x \rangle\right)^{\max\left\{0,-b \right\} }
			dz
			\bigg) .
		\end{aligned}
	\end{equation*}
	
\end{lemma}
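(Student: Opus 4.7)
The plan is to substitute $z=x-y$, which rewrites the integral as $\int_{\mathbb{R}^n} e^{-A|z|^2}\langle x-z\rangle^{-b}\,dz$, and then split the domain into the bulk $\{|z|\leq |x|/2\}$ and the tail $\{|z|>|x|/2\}$. On the tail, the crude pointwise estimate $\langle x-z\rangle\leq 1+|x-z|\lesssim |z|+\langle x\rangle$ yields $\langle x-z\rangle^{-b}\lesssim (|z|+\langle x\rangle)^{\max\{0,-b\}}$ (with the case $b\geq 0$ trivially bounded by $1$), which reproduces the second error term directly.

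On the bulk, the key observation is that for $|z|\leq |x|/2$ and $s\in[0,1]$ one has $|x|/2\leq |x-sz|\leq 3|x|/2$, hence the uniform two-sided equivalence $\langle x-sz\rangle\sim \langle x\rangle$ holds regardless of whether $|x|$ is small or large. Setting $F(s):=\langle x-sz\rangle^{-b}$ and invoking Taylor's formula with integral remainder,
$$F(1)=F(0)+F'(0)+\int_0^1(1-s)F''(s)\,ds,$$
a direct differentiation gives $F'(0)=b\langle x\rangle^{-b-2}\,x\cdot z$ and $|F''(s)|\lesssim \langle x-sz\rangle^{-b-2}|z|^2\lesssim \langle x\rangle^{-b-2}|z|^2$, uniformly in $s$ and in $z$ on the bulk region.

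The zeroth-order contribution yields $\langle x\rangle^{-b}\int_{|z|\leq |x|/2}e^{-A|z|^2}\,dz$, and the difference with $\langle x\rangle^{-b}\int_{\mathbb{R}^n}e^{-A|z|^2}\,dz$ is $-\langle x\rangle^{-b}\int_{|z|>|x|/2}e^{-A|z|^2}\,dz$, which is absorbed by the tail error term already identified (up to the factor $(|z|+\langle x\rangle)^{\max\{0,-b\}}\gtrsim \langle x\rangle^{-b}$ in the case $b<0$ and directly by $1\geq \langle x\rangle^{-b}$ in the case $b\geq 0$). The first-order contribution integrates to zero by odd symmetry of $z\mapsto z$ over the centered ball $\{|z|\leq |x|/2\}$. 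The second-order remainder is controlled by
$$\left|\int_{|z|\leq |x|/2}e^{-A|z|^2}\int_0^1(1-s)F''(s)\,ds\,dz\right|\lesssim \langle x\rangle^{-b-2}\int_{|z|\leq |x|/2}e^{-A|z|^2}|z|^2\,dz ;$$
the last integral is of order $|x|^{n+2}$ for $|x|\leq 1$ (Lebesgue measure of a small ball weighted by $|z|^2$) and of order $1$ for $|x|>1$ (bounded by the full second Gaussian moment), so both cases are dominated by $|x|\langle x\rangle^{-b-2}(|x|^{n+1}\1_{|x|\leq 1}+\1_{|x|>1})$, matching the first displayed error term.

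The only mildly subtle point is the uniform comparison $\langle x-sz\rangle\sim \langle x\rangle$ on the bulk region, which allows both the small-$|x|$ and large-$|x|$ regimes to be handled by a single Taylor argument rather than by a case split; everything else reduces to an odd-function cancellation and standard Gaussian moment bounds.
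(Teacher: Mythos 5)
Your argument is correct, and it reaches the same error structure as the paper via the same bulk/tail decomposition at $|z|=|x|/2$; the tail bound is handled identically. The genuine difference is in the bulk estimate. The paper applies a first\nobreakdash-order mean value theorem to $r\mapsto(1+r)^{-b/2}$ between $r=|x|^2$ and $r=|x-z|^2$, factors $|x-z|^2-|x|^2=(|x-z|-|x|)(|x-z|+|x|)$, and uses $\bigl||x-z|-|x|\bigr|\le|z|$ together with $|x-z|+|x|\lesssim|x|$ on the bulk, so it picks up one power of $|z|$ and pays one power of $|x|$; it also adds and subtracts $\langle x\rangle^{-b}$ \emph{globally} before splitting, so no leftover $\langle x\rangle^{-b}\int_{|z|>|x|/2}e^{-A|z|^2}$ term needs to be absorbed. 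You instead Taylor-expand to second order along $s\mapsto x-sz$, kill the first-order term by odd symmetry over the centered ball, and bound the integral remainder uniformly by $\langle x\rangle^{-b-2}|z|^2$. Your route is a little longer (an extra derivative and a symmetry observation plus the absorption of the leftover tail of the zeroth-order term), but it is also slightly sharper: for $|x|>1$ you get a bulk contribution $\lesssim\langle x\rangle^{-b-2}$ rather than $\lesssim|x|\langle x\rangle^{-b-2}$. The lemma's stated error is not tight in that regime, so both arguments close; if the sharper bound were ever needed elsewhere, the cancellation you exploit would be the one to keep.
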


\begin{proof}
	
	\begin{equation*}
		\int_{\mathbb{R}^n}
		e^{-A|x-y|^2}
		\langle y \rangle^{-b} dy
		=
		\int_{\mathbb{R}^n} e^{-A|z|^2}
		\left[
		\left( 1+|x|^2\right)^{-\frac{b}{2}}
		+
		\left( 1+|x-z|^2\right)^{-\frac{b}{2}}
		-
		\left( 1+|x|^2\right)^{-\frac{b}{2}}
		\right]
		dz .
	\end{equation*}
	Here, we estimate
	\begin{equation*}
		\begin{aligned}
			&
			\left|
			\int_{|z|\le \frac{|x|}{2}} e^{-A|z|^2}
			\left[
			\left( 1+|x-z|^2\right)^{-\frac{b}{2}}
			-
			\left( 1+|x|^2\right)^{-\frac{b}{2}}
			\right]
			dz
			\right|
			\\
			= \ &
			\left|
			-\frac{b}{2}
			\int_{|z|\le \frac{|x|}{2}} e^{-A|z|^2}
			\left[
			1+\theta |x-z|^2
			+
			(1-\theta) |x|^2
			\right]^{-\frac{b}{2} -1}
			\left(
			|x-z| - |x|
			\right)
			\left(
			|x-z| + |x|
			\right)
			dz
			\right|
			\\
			\lesssim \ &
			|x|
			\langle x \rangle^{-b-2}
			\int_{|z|\le \frac{|x|}{2}} e^{-A|z|^2}
			|z|
			dz
			\sim
			|x|
			\langle x \rangle^{-b-2}
			\left(
			|x|^{n+1}\1_{|x|\le 1}
			+
			\1_{|x|> 1}
			\right)
		\end{aligned}
	\end{equation*}
	with a parameter $\theta\in [0,1]$;
	\begin{equation*}
		\begin{aligned}
			&
			\left|
			\int_{|z|> \frac{|x|}{2}} e^{-A|z|^2}
			\left[
			\left( 1+|x-z|^2\right)^{-\frac{b}{2}}
			-
			\left( 1+|x|^2\right)^{-\frac{b}{2}}
			\right]
			dz
			\right|
			\\
			\lesssim \ &
			\begin{cases}
				\int_{|z|> \frac{|x|}{2}} e^{-A|z|^2}  dz ,
				& b\ge 0
				\\
				\int_{|z|> \frac{|x|}{2}} e^{-A|z|^2}
				\left(
				|z|^{-b}
				+
				\langle x \rangle^{-b}
				\right)
				dz ,
				&
				b<0
			\end{cases}
			\sim
			\int_{|z|> \frac{|x|}{2}} e^{-A|z|^2}
			\left(|z|+\langle x \rangle\right)^{\max\left\{0,-b \right\} }
			dz  .
		\end{aligned}
	\end{equation*}
	
\end{proof}

\bigskip

\section*{Acknowledgements}

Z. Li is funded by Natural Science Foundation of Hebei Province, No. A2022205007 and by Science and Technology Project of Hebei Education Department, No. QN2022047. J. Wei is partially supported by NSERC of Canada.

\bigskip

\end{document}